\documentclass{birkjour}
\usepackage{hyperref}
\newtheorem{theorem}{Theorem}[section]
\newtheorem{corrolary}[theorem]{Corollary}
\newtheorem{lemma}[theorem]{Lemma}
\newtheorem{prop}[theorem]{Proposition}
\theoremstyle{definition}

\theoremstyle{remark}
\newtheorem{remark}[theorem]{Remark}

\DeclareMathOperator{\id}{id}
\DeclareMathOperator{\sign}{sgn}

\newcommand{\clopen}[2]{\char"5B #1,#2\char"29}

\begin{document}

\title[On a problem of J.~Matkowski and J.~Weso\l owski]{On a problem of Janusz Matkowski and Jacek Weso\l owski}

\author[J.~Morawiec]{Janusz Morawiec}
\address{Instytut Matematyki\\
Uniwersytet \'{S}l\c aski\\
Bankowa 14\\
PL-40-007 Katowice\\
Poland}
\email{morawiec@math.us.edu.pl}

\author[T.~Z\"{u}rcher]{Thomas Z\"{u}rcher}
\address{Mathematics Institute\\ University of Warwick\\
Coventry\\
CV4 7AL\\
UK}
\email{T.Zurcher@warwick.ac.uk}

\subjclass{Primary 39B12; Secondary 26A30, 26A46, 28A80}
\keywords{functional equations, probabilistic iterated function systems, continuously singular functions, absolutely continuous functions}

\begin{abstract}
We study the problem of the existence of increasing and continuous solutions $\varphi\colon[0,1]\to[0,1]$ such that $\varphi(0)=0$ and $\varphi(1)=1$ of the functional equation
\begin{equation*}
\varphi(x)=\sum_{n=0}^{N}\varphi(f_n(x))-\sum_{n=1}^{N}\varphi(f_n(0)),
\end{equation*}
where $N\in\mathbb N$ and $f_0,\ldots,f_N\colon[0,1]\to[0,1]$ are strictly increasing contractions satisfying the following condition $0=f_0(0)<f_0(1)=f_1(0)<\cdots<f_{N-1}(1)=f_N(0)<f_N(1)=1$. In particular, we give an answer to the problem posed in \cite{M1985} by Janusz Matkowski concerning a very special  case of that equation.
\end{abstract}

\maketitle


\section{Introduction}
During the 47th International Symposium on Functional Equations in 2009 Jacek Weso\l owski asked whether the identity on $[0,1]$ is the only increasing and continuous solution $\varphi\colon [0,1]\to [0,1]$ of the equation
\begin{equation*}\label{e0}
\varphi(x)=\varphi\left(\frac{x}{2}\right)+\varphi\left(\frac{x+1}{2}\right)
-\varphi\left(\frac{1}{2}\right)\tag{$\textsf{e}_1$}
\end{equation*}
satisfying
\begin{equation}\label{cond}
\varphi(0)=0\hspace{3ex}\hbox{ and }\hspace{3ex}\varphi(1)=1.
\end{equation}
This question has been posed in connection with studying probability measures in the plane which are invariant by \lq\lq winding\rq\rq\ (see \cite{MW2012}).

A negative answer to this question has been obtained in \cite{2016} and reads as follows.

\begin{theorem}\phantomsection\label{thm11}
\begin{enumerate}
\item The identity on $[0,1]$ is the only increasing and absolutely continuous solution $\varphi\colon [0,1]\to [0,1]$ of equation~\eqref{e0} satisfying~\eqref{cond}.
\item\label{representation} For every $p\in(0,1)$ the function $\varphi_p\colon[0,1]\to[0,1]$ given by
\begin{equation}\label{p}
\varphi_p\left(\sum_{k=1}^\infty\frac{x_k}{2^k}\right)=
\sum_{k=1}^{\infty}x_k p^{k-\sum_{i=1}^{k-1}x_i}(1-p)^{\sum_{i=1}^{k-1}x_i},
\end{equation}
where $x_k\in\{0,1\}$ for all $k\in\mathbb N$, is an increasing and continuous solution of equation~\eqref{e0} satisfying~\eqref{cond}. Moreover, $\varphi_p$ is singular for every $p\neq\frac{1}{2}$.
\end{enumerate}
\end{theorem}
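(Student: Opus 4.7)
The plan is to handle the two parts separately. For part (i), the strategy is to exploit absolute continuity to pass from the functional equation for $\varphi$ to a functional equation for $\varphi'$. Writing each term as $\int_0^{\,\cdot}\varphi'$ and differentiating (via the Lebesgue differentiation theorem) one obtains
\begin{equation*}
\varphi'(x)=\frac{1}{2}\varphi'\left(\frac{x}{2}\right)+\frac{1}{2}\varphi'\left(\frac{x+1}{2}\right)\qquad\text{for a.e.\ }x\in[0,1],
\end{equation*}
which says that $\varphi'$ is a fixed point of the Perron--Frobenius transfer operator of the doubling map $T(x)=2x\bmod 1$. Since $T$ is ergodic with respect to Lebesgue measure, every $T$-invariant $L^1$ density is constant a.e.; combined with $\int_0^1\varphi'=\varphi(1)-\varphi(0)=1$ this forces $\varphi'\equiv 1$, and hence $\varphi(x)=x$.

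For part (ii) I would give $\varphi_p$ a probabilistic interpretation. Let $(Y_k)_{k\geq 1}$ be i.i.d.\ with $P(Y_k=0)=p$ and $P(Y_k=1)=1-p$, and set $Y=\sum_{k=1}^\infty Y_k 2^{-k}$. Decomposing the event $\{Y<x\}$ according to the first index $k$ at which $Y_k<x_k$ and summing the prefix probabilities recovers exactly the series defining $\varphi_p$, so $\varphi_p(x)=P(Y\leq x)$. This identification immediately gives well-definedness on dyadic rationals (both admissible binary expansions of $x$ produce the same probability), monotonicity, the boundary values $\varphi_p(0)=0$, $\varphi_p(1)=1$, and continuity (since $P(Y=x)=0$ for every $x$). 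The functional equation then reduces to the two elementary digit-shift identities
\begin{equation*}
\varphi_p\left(\frac{x}{2}\right)=p\,\varphi_p(x),\qquad \varphi_p\left(\frac{x+1}{2}\right)=p+(1-p)\,\varphi_p(x),
\end{equation*}
obtained by prepending a $0$ or $1$ to the binary expansion of $x$ and reindexing the defining series; together with $\varphi_p(\frac{1}{2})=p$ they give precisely the desired relation.

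For singularity when $p\neq\frac{1}{2}$, I would apply the strong law of large numbers to $(Y_k)$: the distribution $\mu_p$ of $Y$ is carried by the set of $x\in[0,1]$ whose binary expansion has asymptotic frequency of the digit $0$ equal to $p$, while by Borel's normal-numbers theorem Lebesgue measure is carried by the set where this frequency equals $\frac{1}{2}$. These sets are disjoint when $p\neq\frac{1}{2}$, so $\mu_p$ is mutually singular with Lebesgue measure, and the continuous nondecreasing function $\varphi_p$ satisfies $\varphi_p'=0$ almost everywhere, i.e.\ it is singular. The main technical nuisance I foresee is the careful bookkeeping around the two binary representations of dyadic rationals when verifying the digit-shift identities; beyond that the argument rests on standard facts from ergodic theory and probability.
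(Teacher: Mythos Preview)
Your proposal is correct, and for part~(i) it aligns with the paper's route: the paper deduces uniqueness in $\mathcal{C}_a$ (Theorem~3.3) by reducing to \cite[Theorem~6.2.1]{LM1994}, which is precisely the Perron--Frobenius/invariant-density argument for the piecewise expanding map $S=T$ that you describe.

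For part~(ii) you take a genuinely different, more elementary path. The paper does not argue directly with the formula~\eqref{p}; instead it builds a general IFS framework: it invokes Hutchinson's theorem to obtain the invariant measure $\mu$ for $(f_0,f_1;p,1-p)$, proves $\mu$ is atomless (Lemma~3.1), shows $\varphi(x)=\mu([0,x])$ lies in $\mathcal{C}$ (Theorem~3.2), and then derives the explicit series via an inductive computation of $\mu$ on cylinder intervals (Theorem~3.5). Singularity for $p\neq\tfrac12$ is obtained indirectly, by combining the general absolute-continuity/singularity dichotomy for self-similar measures (Lemma~3.2) with the uniqueness of the absolutely continuous solution (Theorem~3.3) and the identification $\varphi=\id$ when $p=\tfrac12$ (Theorem~4.1, Corollary~4.3). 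Your approach bypasses all of this machinery: recognising $\varphi_p$ as the cumulative distribution function of $Y=\sum Y_k2^{-k}$ with i.i.d.\ Bernoulli digits gives well-definedness, monotonicity, continuity, and the two digit-shift identities immediately, and the SLLN/normal-numbers argument yields singularity directly without appealing to any dichotomy or uniqueness theorem. The trade-off is that the paper's framework handles general $N$ and general contractions $f_0,\dots,f_N$ uniformly, whereas your argument is tailored to the dyadic case~$N=1$; but for the stated theorem your proof is shorter and self-contained.
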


Let us note that the first assertion of Theorem~\ref{thm11} is known (see e.g.\ \cite{R1957} or \cite{LP1977}), however in \cite{2016} we can find an independent proof of it.

It turns out that in 1985 Janusz Matkowski posed a problem asking if equation~\eqref{e0} has a non-linear monotonic and continuous solution $\varphi\colon[0,1]\to\mathbb R$ (see \cite{M1985}). Moreover, he observed that monotonic solutions of equation~\eqref{e0} are connected with invariant measures for a certain map on $[0,1]$. Note that Matkowski's problem is equivalent to Weso\l owski's question.
\begin{remark}\phantomsection\label{rem12}
\begin{enumerate}
\item If $\varphi\colon[0,1]\to[0,1]$ is an increasing and continuous solution of equation~\eqref{e0} satisfying~\eqref{cond}, then for all $a,b\in\mathbb R$ the function $a\varphi+b$ is monotonic, continuous and satisfies~\eqref{e0} for every $x\in[0,1]$.
\item If $\varphi\colon[0,1]\to\mathbb R$ is a monotonic and continuous  solution of equation~\eqref{e0}, different from a constant function, then $\frac{\varphi-\varphi(0)}{\varphi(1)-\varphi(0)}$ is an increasing and continuous function satisfying \eqref{cond} and~\eqref{e0} for every $x\in[0,1]$.
\end{enumerate}
\end{remark}


\section{Preliminaries}\label{Preliminaries}
Fix $N\in\mathbb N$, strictly increasing contractions $f_0,\ldots,f_N\colon[0,1]\to[0,1]$ such that
\begin{equation}\label{0<1}
0=f_0(0)<f_0(1)=f_1(0)<\cdots<f_{N-1}(1)=f_N(0)<f_N(1)=1
\end{equation}
and consider the functional equation
\begin{equation*}\label{e}
\varphi(x)=\sum_{n=0}^{N}\varphi(f_n(x))-\sum_{n=1}^{N}\varphi(f_n(0))
\tag{\textsf{E}}
\end{equation*}
for every $x\in[0,1]$. Denote by $\mathcal C$ the class of all continuous and increasing solutions $\varphi\colon[0,1]\to[0,1]$ of equation~\eqref{e} satisfying~\eqref{cond}. Following the idea from \cite{2016} we show that $\mathcal C$ contains many functions, however, we manage to identify a quite large class of contractions that includes the similitudes such that there is exactly one absolutely continuous solution.

We begin with two observations showing that in many situations the class $\mathcal C$ is determined by two of its subclasses $\mathcal C_a$ and $\mathcal C_s$, consisting of all absolutely continuous and all singular functions, respectively.

\begin{remark}\label{rem21}
If $\varphi_1,\varphi_2\in\mathcal C$ and if $\alpha\in(0,1)$, then
$\alpha \varphi_1+(1-\alpha)\varphi_2\in\mathcal C$.
\end{remark}

To formulate the next remark we recall that a Lebesgue measurable function $f\colon [0,1]\to [0,1]$ is said to be \emph{nonsingular} if the set $f^{-1}(A)$ has Lebesgue measure zero for every set $A\subset [0,1]$ of Lebesgue measure zero (see \cite{LM1994}). Observe that an invertible Lebesgue measurable function $f$ is nonsingular if and only if its inverse $f^{-1}$ satisfies Luzin's condition~(N).

\begin{remark}\label{rem22}
Assume that all the contractions $f_0,\ldots, f_N$ are nonsingular. Then, both the absolutely continuous and the singular parts of every element from $\mathcal C$ satisfy $\eqref{e}$ for every $x\in[0,1]$. 	 	
\end{remark}

\begin{proof}
Fix $\varphi\in\mathcal C$ and denote by $\varphi_a$ and $\varphi_s$ its absolutely continuous and singular parts\footnote{The parts are unique up to a constant. For definiteness, we choose them such that $\varphi_a(0)=\varphi_s(0)=0$.}, respectively. By~\eqref{e}, for every $x\in[0,1]$ we have
\begin{equation*}
\varphi_a(x)-\sum_{n=0}^{N}\varphi_a(f_n(x))=
-\varphi_s(x)+\sum_{n=0}^{N}\varphi_s(f_n(x))-\sum_{n=1}^{N}\varphi(f_n(0)),
\end{equation*}
and hence there exists a real constant $c$ such that
\begin{equation*}
\varphi_a(x)-\sum_{n=0}^{N}\varphi_a(f_n(x))=c
\quad\hbox{and}\quad
-\varphi_s(x)+\sum_{n=0}^{N}\varphi_s(f_n(x))-\sum_{n=1}^{N}\varphi(f_n(0))=c.
\end{equation*}
This jointly with the fact $f_0(0)=0$ stipulated in~\eqref{0<1} gives
\begin{equation*}
c=\varphi_a(0)-\sum_{n=0}^{N}\varphi_a(f_n(0))=-\sum_{n=1}^{N}\varphi_a(f_n(0)),
\end{equation*}
and in consequence
\begin{equation*}
\varphi_a(x)=\sum_{n=0}^{N}\varphi_a(f_n(x))-\sum_{n=1}^{N}\varphi_a(f_n(0))
\end{equation*}
and
\begin{equation*}
\varphi_s(x)=\sum_{n=0}^{N}\varphi_s(f_n(x))-\sum_{n=1}^{N}\varphi_s(f_n(0))
\end{equation*}
for every $x\in[0,1]$.
\end{proof}

For all $k\in\mathbb N$ and $n_1,\ldots,n_k\in\{0,\ldots,N\}$ denote by $f_{n_1,\ldots,n_k}$ the composition $f_{n_1}\circ\cdots\circ f_{n_k}$.
We extend the notation to the case $k=0$ by letting $f_{n_1,\ldots, n_0}$ being the identity.

\begin{lemma}\label{lem23}
Let $(n_k)_{k\in\mathbb N}$ be a sequence of elements of $\{0,\ldots,N\}$. Then the sequence $(f_{n_1,\ldots,n_k}(0))_{k\in\mathbb N}$ is increasing and the sequence $(f_{n_1,\ldots,n_k}(1))_{k\in\mathbb N}$ is decreasing. Moreover,
\begin{equation*}
\lim_{k\to\infty}f_{n_1,\ldots,n_k}(y)=\lim_{k\to\infty}f_{n_1,\ldots,n_k}(z)
\end{equation*}
for all $y,z\in[0,1]$.
\end{lemma}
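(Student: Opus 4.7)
The plan is to establish the monotonicity of the two endpoint sequences, then exploit the contraction hypothesis to squeeze the image of an arbitrary point between them.

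First I would note that every composition $f_{n_1,\ldots,n_k}$ is strictly increasing, being a composition of strictly increasing maps. Writing $f_{n_1,\ldots,n_{k+1}}(0) = f_{n_1,\ldots,n_k}\bigl(f_{n_{k+1}}(0)\bigr)$ and using both $f_{n_{k+1}}(0) \geq 0$ (since the range of $f_{n_{k+1}}$ is contained in $[0,1]$) and the monotonicity of $f_{n_1,\ldots,n_k}$, I obtain $f_{n_1,\ldots,n_{k+1}}(0) \geq f_{n_1,\ldots,n_k}(0)$. The symmetric argument, based on $f_{n_{k+1}}(1) \leq 1$, shows that $(f_{n_1,\ldots,n_k}(1))_{k\in\mathbb N}$ is decreasing. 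Both sequences lie in $[0,1]$, so each converges.

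To show that the two limits coincide, I would invoke the contraction hypothesis: setting $L = \max_{0\leq n\leq N}\mathrm{Lip}(f_n) < 1$, iteration gives
\[
0 \leq f_{n_1,\ldots,n_k}(1) - f_{n_1,\ldots,n_k}(0) \leq L^k,
\]
and the right-hand side tends to $0$. For any $y \in [0,1]$, monotonicity yields $f_{n_1,\ldots,n_k}(0) \leq f_{n_1,\ldots,n_k}(y) \leq f_{n_1,\ldots,n_k}(1)$, so the squeeze theorem forces $\lim_{k\to\infty} f_{n_1,\ldots,n_k}(y)$ to exist and to agree with the common value of the two extremal limits; in particular it is independent of $y$, which establishes the claimed equality of limits for all $y,z \in [0,1]$.

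The only real obstacle is conventional: the word ``contraction'' is being used here in the standard Banach sense of uniform Lipschitz constant strictly less than $1$, which is what makes the exponential estimate work. Were the weaker topological meaning $|f(x)-f(y)| < |x-y|$ intended, one would need an extra compactness step on $[0,1]$ to extract a uniform contraction ratio before the exponential decay argument applies; with the standard reading, the proof is essentially a routine iterated-function-system computation.
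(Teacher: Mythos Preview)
Your proof is correct and follows essentially the same route as the paper's: monotonicity of the endpoint sequences via $0\le f_{n_{k+1}}(0)$, $f_{n_{k+1}}(1)\le 1$ and the strict increase of the outer composition, then the contraction bound $f_{n_1,\ldots,n_k}(1)-f_{n_1,\ldots,n_k}(0)\le L^k$ to squeeze all intermediate images to the common limit. The paper phrases the last step as a direct estimate on $|f_{n_1,\ldots,n_k}(y)-f_{n_1,\ldots,n_k}(z)|$ rather than invoking the squeeze theorem, but the content is identical.
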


\begin{proof}
Fix a sequence $(n_k)_{k\in\mathbb N}$ of elements of $\{0,\ldots,N\}$ and an integer number $k\geq 2$. From~\eqref{0<1} we have
\begin{equation*}
0\leq f_{n_k}(0)< f_{n_k}(1)\leq 1
\end{equation*}
and by the strict monotonicity of $f_{n_1,\ldots,n_{k-1}}$ we conclude that
\begin{equation*}
f_{n_1,\ldots,n_{k-1}}(0)\leq f_{n_1,\ldots,n_{k}}(0)<f_{n_1,\ldots,n_{k}}(1)\leq f_{n_1,\ldots,n_{k-1}}(1).
\end{equation*}

To complete the proof it is enough to observe that for all $y,z\in[0,1]$ and $k\in\mathbb N$ we have
\begin{equation*}
|f_{n_1,\ldots,n_k}(y)-f_{n_1,\ldots,n_k}(z)|\leq f_{n_1,\ldots,n_k}(1)-f_{n_1,\ldots,n_k}(0)\leq c^k,
\end{equation*}
where $c\in(0,1)$ is the largest Lipschitz constant of the given contractions $f_0,\ldots,f_N$.
\end{proof}

\begin{lemma}\label{lem24}
For every $x\in[0,1]$ there exists a sequence $(x_k)_{k\in\mathbb N}$ of elements of $\{0,\ldots,N\}$ such that
\begin{equation}\label{x}
x=\lim_{k\to\infty}f_{x_1,\ldots,x_k}(0).
\end{equation}
\end{lemma}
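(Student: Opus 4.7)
The plan is to build the sequence $(x_k)$ recursively so that the point $x$ always lies in the shrinking interval $[f_{x_1,\ldots,x_k}(0),f_{x_1,\ldots,x_k}(1)]$, and then let the convergence follow immediately from Lemma~\ref{lem23}.

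First I would record the key consequence of~\eqref{0<1}: the intervals $[f_n(0),f_n(1)]$ for $n=0,\ldots,N$ cover $[0,1]$ and overlap only at the matched endpoints. Hence for any strictly increasing continuous $g\colon[0,1]\to[0,1]$, the image $g([0,1])=[g(0),g(1)]$ decomposes as
\begin{equation*}
[g(0),g(1)]=\bigcup_{n=0}^{N}[g(f_n(0)),g(f_n(1))],
\end{equation*}
so every point of $[g(0),g(1)]$ belongs to at least one of these subintervals.

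Next I would carry out the induction. Fix $x\in[0,1]$. For $k=0$ we have $f_{x_1,\ldots,x_0}=\id$, so trivially $x\in[0,1]=[f_{x_1,\ldots,x_0}(0),f_{x_1,\ldots,x_0}(1)]$. Suppose $x_1,\ldots,x_k$ have been chosen in $\{0,\ldots,N\}$ with $x\in[f_{x_1,\ldots,x_k}(0),f_{x_1,\ldots,x_k}(1)]$. Applying the decomposition above to $g=f_{x_1,\ldots,x_k}$ (which is strictly increasing by composition) produces some $n\in\{0,\ldots,N\}$ with
\begin{equation*}
x\in[f_{x_1,\ldots,x_k}(f_n(0)),f_{x_1,\ldots,x_k}(f_n(1))]=[f_{x_1,\ldots,x_k,n}(0),f_{x_1,\ldots,x_k,n}(1)],
\end{equation*}
and I set $x_{k+1}:=n$. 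This yields the desired sequence with $f_{x_1,\ldots,x_k}(0)\le x\le f_{x_1,\ldots,x_k}(1)$ for every $k$.

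Finally, Lemma~\ref{lem23} gives $f_{x_1,\ldots,x_k}(1)-f_{x_1,\ldots,x_k}(0)\le c^k\to 0$, so squeezing yields $x=\lim_{k\to\infty}f_{x_1,\ldots,x_k}(0)$, which is~\eqref{x}. The only minor subtlety is that when $x$ coincides with a shared endpoint $f_{x_1,\ldots,x_k,n-1}(1)=f_{x_1,\ldots,x_k,n}(0)$ the choice of $x_{k+1}$ is not unique, but this does not obstruct the construction since any valid choice works; there is no real hard step here, merely the bookkeeping that the nested intervals $[f_{x_1,\ldots,x_k}(0),f_{x_1,\ldots,x_k}(1)]$ remain a cover at every level.
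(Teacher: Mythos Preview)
Your argument is correct and follows essentially the same route as the paper: build a nested sequence of intervals $[f_{x_1,\ldots,x_k}(0),f_{x_1,\ldots,x_k}(1)]$ containing $x$ using the cover property~\eqref{0<1}, then invoke Lemma~\ref{lem23} to force convergence. The only cosmetic difference is that at the inductive step the paper pulls $x$ back through $f_{x_1,\ldots,x_k}^{-1}$ to land in $[0,1]$ and then applies~\eqref{0<1} directly, whereas you push the decomposition forward through $g=f_{x_1,\ldots,x_k}$; these are two sides of the same coin.
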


\begin{proof}
Fix $x\in[0,1]$ and observe that according to Lemma~\ref{lem23} it is enough to show that there exists a sequence $(x_k)_{k\in\mathbb N}$ of elements of $\{0,\ldots,N\}$ such that
\begin{equation}\label{fff}
f_{x_1,\ldots,x_k}(0)\leq x\leq f_{x_1,\ldots,x_k}(1)
\end{equation}
for every $k\in\mathbb N$.

By~\eqref{0<1} there exists $x_1\in\{0,\ldots,N\}$ such that
\begin{equation*}
f_{x_1}(0)\leq x\leq f_{x_1}(1).
\end{equation*}
Thus, \eqref{fff} holds for $k=1$.

Fix $k\in\mathbb N$ and assume inductively that there exist $x_1,\ldots,x_k\in\{0,\ldots,N\}$ such that \eqref{fff} holds. Then
\begin{equation*}
0\leq f_{x_1,\ldots,x_k}^{-1}(x)\leq 1
\end{equation*}
and by~\eqref{0<1} there exists $x_{k+1}\in\{0,\ldots,N\}$ such that
\begin{equation*}
f_{x_{k+1}}(0)\leq f_{x_1,\ldots,x_k}^{-1}(x)\leq f_{x_{k+1}}(1).
\end{equation*}
Hence
\begin{equation*}
f_{x_1,\ldots,x_{k+1}}(0)\leq x\leq f_{x_1,\ldots,x_{k+1}}(1),
\end{equation*}
and the proof is complete.
\end{proof}


\section{General case}\label{General case}
Fix positive real numbers $p_0,\ldots,p_N$ such that
\begin{equation}\label{prob}
\sum_{n=0}^{N}p_n=1.
\end{equation}
Then there exists a unique Borel probability measure $\mu$ such that
\begin{equation}\label{inv}
\mu(A)=\sum_{n=0}^{N}p_n\mu(f_n^{-1}(A))
\end{equation}
for every Borel set $A\subset [0,1]$ (see \cite{H1981}; cf.\ \cite{F1997}).
From now on the letter $\mu$ will be reserved for the unique Borel probability measure satisfying~\eqref{inv} for every Borel set $A\subset [0,1]$.

\begin{lemma}\label{lem31}
The measure $\mu$ is continuous.
\end{lemma}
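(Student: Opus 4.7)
The goal is to prove that $\mu(\{x\})=0$ for every $x\in[0,1]$, since continuity of a Borel measure on $[0,1]$ means absence of atoms. My plan is to first dispose of the two endpoints, and then run a ``maximum principle'' argument using the invariance identity~\eqref{inv}.

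First I would handle $x=0$ and $x=1$ directly. Applying \eqref{inv} with $A=\{0\}$: because each $f_n$ is strictly increasing it is injective, so $f_n^{-1}(\{0\})$ is either empty or a single point. By~\eqref{0<1} we have $f_0(0)=0$ and $f_n(0)>0$ for $n\geq 1$, so $f_0^{-1}(\{0\})=\{0\}$ while $f_n^{-1}(\{0\})=\emptyset$ for $n\geq 1$. Hence $\mu(\{0\})=p_0\mu(\{0\})$, and since $p_0<1$ (the other $p_n$ are positive and sum with $p_0$ to $1$), we conclude $\mu(\{0\})=0$. The symmetric argument using $f_N(1)=1$ and $f_n(1)<1$ for $n<N$ gives $\mu(\{1\})=0$.

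Next, set $M:=\sup_{x\in[0,1]}\mu(\{x\})$ and suppose for contradiction that $M>0$. Since $\mu$ is a probability measure, the set $\{x:\mu(\{x\})\geq M/2\}$ is finite, so the supremum is attained at some point $x_0$; by the previous step $x_0\in(0,1)$. Applying~\eqref{inv} to $A=\{x_0\}$ gives
\begin{equation*}
M=\mu(\{x_0\})=\sum_{n=0}^{N}p_n\,\mu(f_n^{-1}(\{x_0\})),
\end{equation*}
where each term $\mu(f_n^{-1}(\{x_0\}))$ is at most $M$ (it is either $0$ or the $\mu$-mass of a single point). The structural condition~\eqref{0<1} tells us the intervals $[f_n(0),f_n(1)]$ have pairwise disjoint interiors and meet only at the consecutive endpoints $f_n(1)=f_{n+1}(0)$. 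I split into two cases according to where $x_0$ sits.

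If $x_0$ lies in the open interior $(f_n(0),f_n(1))$ of exactly one range, then $f_m^{-1}(\{x_0\})=\emptyset$ for $m\neq n$, so only the $n$-th term survives and $M\leq p_nM$; since $p_n<1$ this is a contradiction. Otherwise $x_0=f_n(1)=f_{n+1}(0)$ for some $n\in\{0,\ldots,N-1\}$, in which case strict monotonicity gives $f_n^{-1}(\{x_0\})=\{1\}$, $f_{n+1}^{-1}(\{x_0\})=\{0\}$, and $f_m^{-1}(\{x_0\})=\emptyset$ otherwise; then
\begin{equation*}
M=p_n\mu(\{1\})+p_{n+1}\mu(\{0\})=0
\end{equation*}
by the first step, again a contradiction. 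Hence $M=0$ and $\mu$ is continuous. There is no real obstacle: the only point to be careful about is justifying that the supremum of atom masses is attained, which is immediate from $\mu$ being a probability measure.
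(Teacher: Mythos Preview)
Your argument is correct and, in fact, cleaner than the paper's. The authors first verify (as you do) that the endpoints $0,1$ and all intermediate nodes $f_n(0)=f_{n-1}(1)$ carry no mass, but then proceed by establishing the explicit cylinder-measure formula
\[
\mu\big([f_{n_1,\ldots,n_k}(0),f_{n_1,\ldots,n_k}(1)]\big)=\prod_{n=0}^{N}p_n^{\#\{i:n_i=n\}}
\]
by induction, and combine it with Lemmas~\ref{lem23} and~\ref{lem24} (the IFS coding of $[0,1]$) to bound $\mu(\{x\})$ by $(\max_n p_n)^k\to 0$. Your maximum-principle argument bypasses all of this machinery: once the sup of atom masses is attained at some $x_0\in(0,1)$, a single application of \eqref{inv} forces a contradiction. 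The trade-off is that the paper's route yields the cylinder formula and the relation $\mu(B)=p_n\mu(f_n^{-1}(B))$ for $B\subset[f_n(0),f_n(1)]$ as byproducts, which are reused in the proofs of Theorems~\ref{thm32} and~\ref{thm35}; with your proof one would have to derive those separately (which is easy once all atoms are known to vanish).
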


\begin{proof}
As a first step we want to show that
\begin{equation}\label{f_n(0)}
\mu\big(\{f_n(0)\}\big)=\mu\big(\{f_n(1)\}\big)=0
\end{equation}
for every $n\in\{0,\ldots,N\}$.

Applying~\eqref{inv} and using~\eqref{0<1}, we obtain
\begin{equation*}
\mu(\{0\})=\mu(\{f_0(0)\})=\sum_{n=0}^{N}p_n\mu(\{f_n^{-1}(f_0(0))\})=
p_0\mu(\{0\})+\sum_{n=1}^{N}p_n\mu(\emptyset),
\end{equation*}
By the fact that $p_0\in(0,1)$ we conclude that
\begin{equation*}
\mu(\{f_0(0)\})=\mu(\{0\})=0.
\end{equation*}
Similarly, applying \eqref{inv}, \eqref{0<1} and the fact that $p_N\in(0,1)$ we conclude that
\begin{equation*}
\mu(\{f_N(1)\})=\mu(\{1\})=0.
\end{equation*}
If $n\in\{1,\ldots,N\}$, then applying again \eqref{inv} and~\eqref{0<1}, we obtain
\begin{equation*}
\mu(\{f_{n-1}(1)\})=\mu(\{f_n(0)\})=p_{n-1}\mu(\{1\})+p_{n}\mu(\{0\})=0.
\end{equation*}

Our second step is to prove that
\begin{equation}\label{formula2}
\mu\big([f_{n_1,\ldots,n_k}(0),f_{n_1,\ldots,n_k}(1)]\big)=
\prod_{n=0}^{N}p_n^{\#\{i\in\{1,\ldots,k\}:n_i=n\}}
\end{equation} 	 	
for all $k\in\mathbb N\cup\{0\}$ and $n_1,\ldots,n_k\in\{0,\ldots,N\}$.

Since $\mu([0,1])=1$, it follows that \eqref{formula2} is satisfied for $k=0$.

Fix $k\in\mathbb N\cup\{0\}$ and assume that \eqref{formula2} holds for all $n_1,\ldots,n_{k}\in\{0,\ldots,N\}$. Fix also $n_{k+1}\in\{0,\ldots,N\}$.

Note first that from \eqref{f_n(0)}, \eqref{0<1}, and \eqref{inv}, we get
\begin{equation}\label{mu(B)}
\mu(B)=p_{n}\mu(f_n^{-1}(B))
\end{equation}
for all $n\in\{0,\ldots,N\}$ and Borel sets $B\subset[f_n(0),f_n(1)]$. This jointly with~\eqref{formula2} implies
\begin{equation*}
\begin{split}
\mu\big([f_{n_1,\ldots,n_{k+1}}(0),f_{n_1,\ldots,n_{k+1}}(1)]\big)
&=p_{n_1}\mu\big([f_{n_2,\ldots,n_{k+1}}(0),f_{n_2,\ldots,n_{k+1}}(1)]\big)\\
&=p_{n_1}\prod_{n=0}^{N}p_n^{\#\{i\in\{2,\ldots,k+1\}:n_i=n\}}\\
&=\prod_{n=0}^{N}p_n^{\#\{i\in\{1,\ldots,k+1\}:n_i=n\}}.
\end{split}
\end{equation*}

To prove that $\mu$ is continuous it is sufficient to show that $\mu$ has no atoms.

Fix $x\in[0,1]$. From Lemma~\ref{lem24} we conclude that there exists a sequence $(x_k)_{k\in\mathbb N}$ of elements of $\{0,\ldots,N\}$ such that
\eqref{x} holds. Then applying Lemma~\ref{lem23} and \eqref{formula2} with $n_i=x_i$ for $i\in \{1,\ldots,k\}$, we obtain
\begin{equation*}
\begin{split}
\mu(\{x\})&=\mu\left(\bigcap_{k\in\mathbb N} \big[f_{x_1,\ldots,x_k}(0), f_{x_1,\ldots,x_k}(1)\big]\right)\\
&=\lim_{k\to\infty}\mu\left(\big[f_{x_1,\ldots,x_k}(0), f_{x_1,\ldots,x_k}(1)\big]\right)\\
&=\lim_{k\to\infty}\prod_{n=0}^{N}p_n^{\#\{i\in\{1,\ldots,k\}:x_i=n\}}
\leq\lim_{k\to\infty}{\left(\max\{p_0,\ldots,p_N\}\right)}^k=0,
\end{split}
\end{equation*}
and the proof is complete.
\end{proof}

The next lemma is folklore (the reader can consult \cite{DF1996,PSS2000} in the case where $f_0,\ldots,f_N$ are similitudes and \cite{LaMy1994} in the case where $f_0,\ldots,f_N$ are contractions). More general results in this direction can be found e.g.\ in \cite{S2000a,S2000b}.

\begin{lemma}\label{lem32}
The measure $\mu$ is either singular or absolutely continuous with respect to the Lebesgue measure on $\mathbb R$.
\end{lemma}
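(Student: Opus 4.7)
My plan is to follow the classical pure-type argument: invoke the Lebesgue decomposition of $\mu$, show that both parts separately satisfy~\eqref{inv}, and conclude by uniqueness of the invariant probability measure. Let $\lambda$ denote Lebesgue measure on $[0,1]$ and write $\mu=\mu_a+\mu_s$ with $\mu_a\ll\lambda$ and $\mu_s\perp\lambda$; set $\alpha=\mu_a([0,1])$ and $\beta=\mu_s([0,1])$, so $\alpha+\beta=1$. The aim is to show $\alpha\beta=0$. Substituting this decomposition into~\eqref{inv} produces
\[
\mu \;=\; \sum_{n=0}^{N}p_{n}(f_{n})_{\ast}\mu_a \;+\; \sum_{n=0}^{N}p_{n}(f_{n})_{\ast}\mu_s \;=:\; \rho+\sigma,
\]
where $(f_{n})_{\ast}\nu(A):=\nu(f_{n}^{-1}(A))$.

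The first step is to observe that $\sigma$ is singular. Indeed, $\mu_s$ is concentrated on some Lebesgue-null Borel set $E$, and each $f_n$ is Lipschitz (being a contraction), so $f_n(E)$ is Lebesgue-null; therefore each $(f_{n})_{\ast}\mu_s$ is concentrated on $f_n(E)$, and $\sigma$ on $\bigcup_{n=0}^{N}f_n(E)$. Next, since $\mu_a\le\mu$ as measures we have $\rho\le\mu$, so $\mu-\rho=\sigma\ge 0$. Writing the Lebesgue decomposition $\rho=\rho_a+\rho_s$, the equality $\mu=\rho_a+(\rho_s+\sigma)$ is itself a Lebesgue decomposition of $\mu$, and its uniqueness forces $\mu_a=\rho_a$. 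A mass count now gives $\rho([0,1])=\sum_{n=0}^{N}p_n\mu_a([0,1])=\alpha=\rho_a([0,1])$, hence $\rho_s=0$, so $\rho=\mu_a$ and $\sigma=\mu_s$. In particular $\mu_a$ and $\mu_s$ each satisfy~\eqref{inv}.

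To finish, if both $\alpha>0$ and $\beta>0$ held, then $\alpha^{-1}\mu_a$ and $\beta^{-1}\mu_s$ would be two distinct Borel probability measures satisfying~\eqref{inv}, contradicting the uniqueness recalled just before Lemma~\ref{lem31}. Hence $\alpha=0$ or $\beta=0$, which is the claimed dichotomy. The only nontrivial point is the mass-matching argument that upgrades ``the absolutely continuous part of $\rho$ equals $\mu_a$'' to ``$\rho=\mu_a$''; the sole analytic input is the Lipschitz character of each $f_n$, so no further regularity (such as the nonsingularity hypothesis appearing in Remark~\ref{rem22}) is required.
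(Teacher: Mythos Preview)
Your argument is correct. The paper itself does not give a proof of this lemma at all; it simply declares the result folklore and points to references (Dubins--Freedman, Peres--Schlag--Solomyak, Lasota--Myjak, Szarek). What you have written is precisely the classical pure-type argument underlying those sources: decompose $\mu=\mu_a+\mu_s$, push each part through the operator $\nu\mapsto\sum_n p_n(f_n)_*\nu$, identify the absolutely continuous and singular parts of the result, and invoke uniqueness of the invariant probability. The one place requiring care is that $(f_n)_*\mu_a$ need not be absolutely continuous without the extra nonsingularity hypothesis of Remark~\ref{rem22}; your mass-matching step neatly sidesteps this by comparing total masses rather than arguing directly that $\rho\ll\lambda$, so only the Lipschitz property of the $f_n$ (ensuring $(f_n)_*\mu_s\perp\lambda$) is needed. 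This makes your write-up slightly sharper than a naive attempt and fully in line with the folklore the paper cites.
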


Define the function $\varphi\colon[0,1]\to[0,1]$ by
\begin{equation*}
\varphi(x)=\mu([0,x]).
\end{equation*}

From now on the letter $\varphi$ will be reserved for the just defined function.

\begin{theorem}\label{thm32}
Either $\varphi\in\mathcal C_a$ or $\varphi\in\mathcal C_s$.
\end{theorem}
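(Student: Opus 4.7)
The plan is to verify that $\varphi \in \mathcal{C}$ outright, and then invoke Lemma~\ref{lem32} to place $\varphi$ in $\mathcal{C}_a$ or $\mathcal{C}_s$.

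First I would dispose of the easier properties. The boundary conditions are immediate: $\varphi(0) = \mu(\{0\}) = 0$ by~\eqref{f_n(0)}, and $\varphi(1) = \mu([0,1]) = 1$. Continuity of $\varphi$ is equivalent to $\mu$ being atomless, which is the content of Lemma~\ref{lem31}. The function $\varphi$ is clearly non-decreasing; strict monotonicity follows by showing that every non-degenerate subinterval of $[0,1]$ carries positive $\mu$-mass. Given $0 \leq a < b \leq 1$, pick an interior point $x$ and apply Lemma~\ref{lem24} to obtain $(x_k)$ with $f_{x_1,\ldots,x_k}(0)\to x$. By Lemma~\ref{lem23} the enclosing cylinder $[f_{x_1,\ldots,x_k}(0),f_{x_1,\ldots,x_k}(1)]$ shrinks to $\{x\}$, so for $k$ large enough it sits inside $[a,b]$; its $\mu$-measure equals $\prod_{n} p_n^{\#\{i\colon x_i=n\}} > 0$ by~\eqref{formula2}.

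The core step is the functional equation. For every $x \in [0,1]$ and $n \in \{0,\ldots,N\}$ the map $f_n$ restricts to a strictly increasing bijection of $[0,1]$ onto $[f_n(0), f_n(1)]$, so $f_n^{-1}([f_n(0), f_n(x)]) = [0,x]$. Combining~\eqref{mu(B)} with the vanishing~\eqref{f_n(0)} of the endpoint atom,
\begin{equation*}
\varphi(f_n(x)) - \varphi(f_n(0)) = \mu([f_n(0), f_n(x)]) = p_n\, \mu([0,x]) = p_n\, \varphi(x).
\end{equation*}
Summing over $n$, using $\sum_{n=0}^{N} p_n = 1$ from~\eqref{prob} and $\varphi(f_0(0)) = \varphi(0) = 0$, one obtains
\begin{equation*}
\sum_{n=0}^{N}\varphi(f_n(x)) - \sum_{n=1}^{N}\varphi(f_n(0)) = \varphi(x),
\end{equation*}
which is exactly~\eqref{e}.

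Thus $\varphi \in \mathcal{C}$, and by Lemma~\ref{lem32} the measure $\mu$ is either absolutely continuous or singular with respect to Lebesgue measure; since $\varphi$ is the distribution function of the (atomless) measure $\mu$, this dichotomy transfers directly to $\varphi$, giving $\varphi \in \mathcal{C}_a$ or $\varphi \in \mathcal{C}_s$. I do not anticipate a real obstacle: once one spots that~\eqref{mu(B)} together with~\eqref{f_n(0)} yields the clean telescoping identity $\varphi(f_n(x)) = \varphi(f_n(0)) + p_n \varphi(x)$, the functional equation falls out by a one-line summation.
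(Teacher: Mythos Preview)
Your argument is correct and follows essentially the same route as the paper: verify $\varphi\in\mathcal C$ using Lemma~\ref{lem31} and the identity~\eqref{mu(B)}, then invoke Lemma~\ref{lem32} for the dichotomy. The only cosmetic differences are that you first isolate the per-index identity $\varphi(f_n(x))=\varphi(f_n(0))+p_n\varphi(x)$ before summing (the paper sums the measures first), and you add a proof of \emph{strict} monotonicity via~\eqref{formula2}, which the paper omits.
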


\begin{proof}
We first prove that $\varphi\in\mathcal C$.
	
That $\varphi$ is increasing is a consequence of the monotonicity of $\mu$.
The continuity of $\varphi$ and that $\varphi(0)=0$ follows from Lemma~\ref{lem31}. Since $\mu$ is a probability measure, we have $\varphi(1)=1$.

From~\eqref{mu(B)} we get
\begin{equation*}
\mu(f_n(B))=p_n\mu(B)
\end{equation*}
for all $n\in\{0,\ldots,N\}$ and Borel sets $B\subset[0,1]$. This jointly with~\eqref{prob} gives
\begin{equation*}
\sum_{n=0}^{N}\mu(f_n(B))=\sum_{n=0}^{N}p_n\mu(B)=\mu(B)
\end{equation*}
for every Borel set $B\subset[0,1]$.
Hence,
\begin{equation*}
\begin{split}
\varphi(x)&=\mu([0,x])=\sum_{n=0}^{N}\mu\big(f_n([0,x])\big)=\sum_{n=0}^{N}\mu\big([f_n(0),f_n(x)]\big)\\
&=\sum_{n=0}^{N}\mu\big([0,f_n(x)]\big)-\sum_{n=0}^{N}\mu\big([0,f_n(0)]\big)\\
&=\sum_{n=0}^{N}\varphi(f_n(x))-\sum_{n=0}^{N}\varphi(f_n(0))=
\sum_{n=0}^{N}\varphi(f_n(x))-\sum_{n=1}^{N}\varphi(f_n(0))
\end{split}
\end{equation*}
for every $x\in[0,1]$.

Thus, we have proved that $\varphi\in\mathcal C$. Now the assertion of the lemma follows from Lemma~\ref{lem32}; to see it the reader can consult \cite[Theorem 31.7]{B1995}.
\end{proof}

It is a very difficult (and still open) problem to decide for which parameters $p_0,\ldots,p_N$ the function $\varphi$ is absolutely continuous. However, it turns out that under some assumptions on the given contractions $f_0,\ldots,f_N$ equation~\eqref{e} has exactly one absolutely continuous solution in the class $\mathcal C$.

\begin{theorem}\label{thm33}
Assume that $f_0,\ldots,f_N\in C^2([0,1])$ and there exist $\lambda\in(0,1)$ and $c\in(0,\infty)$ such that $0<f_n'(x)\leq\lambda$ and $f_n''(x)\leq cf_n'(x)$ for all $n\in\{0,\ldots,N\}$ and $x\in[0,1]$. Then $\mathcal C_a$ consists of exactly one function.
\end{theorem}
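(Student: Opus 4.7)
The plan is to differentiate \eqref{e} and reduce the problem to the uniqueness of an absolutely continuous invariant probability measure of the expanding Markov map $T\colon[0,1]\to[0,1]$ defined piecewise by $T(y)=f_n^{-1}(y)$ for $y\in[f_n(0),f_n(1)]$. The hypotheses on $f_0,\dots,f_N$ are precisely what places $T$ in the classical framework of piecewise $C^2$ full-branch expanding Markov maps with bounded distortion.

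For $\varphi\in\mathcal{C}_a$ set $h:=\varphi'$. Then $h\geq 0$, $\int_0^1 h=1$, and differentiating \eqref{e} (valid because $\varphi$ is absolutely continuous and each $f_n$ is $C^2$ with $f_n'>0$) yields, for almost every $x\in[0,1]$,
\begin{equation*}
h(x)=\sum_{n=0}^{N}h(f_n(x))f_n'(x),
\end{equation*}
which is exactly the statement $Lh=h$ for the Perron--Frobenius operator $L$ of $T$. Conversely, every probability density fixed by $L$ integrates up to an element of $\mathcal{C}_a$. So the theorem reduces to showing that $L$ has a unique probability-density fixed point. The hypothesis $f_n'\leq\lambda<1$ gives uniform expansion $|T'|\geq 1/\lambda$. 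The hypothesis $f_n''\leq c f_n'$ is the Lipschitz bound $(\log f_n')'\leq c$, and a telescoping estimate, exactly in the spirit of Lemma~\ref{lem23}, gives for every word $\omega=(n_1,\dots,n_k)$ and every $x,y\in[0,1]$
\begin{equation*}
\bigl|\log F_\omega'(x)-\log F_\omega'(y)\bigr|\leq c\sum_{j=1}^{k}\lambda^{k-j}|x-y|\leq\frac{c}{1-\lambda},
\end{equation*}
where $F_\omega=f_{n_1}\circ\cdots\circ f_{n_k}$. This is uniform bounded distortion for all inverse branches of $T^k$.

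Existence of at least one probability density fixed by $L$ can be obtained via the Lasota--Yorke inequality, which makes $L$ quasicompact on $\mathrm{BV}([0,1])$ and puts $1$ in its peripheral spectrum. For uniqueness, assume $h_1,h_2$ are two probability densities with $Lh_i=h_i$. Iterating $L^k h_i=h_i$ and using the bounded distortion estimate above, one shows that each $h_i$ is essentially bounded above and below by positive constants (depending only on $c/(1-\lambda)$), so the measures $\mu_{h_i}=h_i\cdot\mathrm{Leb}$ are equivalent to Lebesgue measure. The full-branch Markov property, that $T$ maps each interval $[f_n(0),f_n(1)]$ bijectively onto $[0,1]$, combined with bounded distortion, yields ergodicity of $T$ with respect to any absolutely continuous invariant probability. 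A standard argument then shows that two $T$-invariant probability measures with the same null sets must coincide: the Radon--Nikodym derivative $h_1/h_2$ is $T$-invariant, hence constant by ergodicity, and the normalisation $\int h_i=1$ forces $h_1=h_2$.

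The main obstacle is the ergodicity step. Although classical for piecewise $C^2$ full-branch expanding Markov maps with bounded distortion (see \cite{LM1994} and the references there), presenting a self-contained proof requires either the spectral input from Lasota--Yorke / Ionescu-Tulcea--Marinescu, or a direct distortion-based covering argument: showing that for any $T$-invariant Borel set $A$ and every cylinder $I_\omega$, the ratio $\mu_h(A\cap I_\omega)/\mu_h(I_\omega)$ is comparable (with constants from bounded distortion) to $\mu_h(A)$, so that a Lebesgue-density point argument forces $\mu_h(A)\in\{0,1\}$.
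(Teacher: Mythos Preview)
Your approach is exactly the paper's: the paper defines the same piecewise expanding map (called $S$ there, your $T$) and then simply invokes \cite[Theorem~6.2.1]{LM1994}, which is precisely the existence/uniqueness result for absolutely continuous invariant probability measures of piecewise $C^2$ expanding maps that your sketch unpacks. Where the paper cites, you outline the standard Lasota--Yorke/bounded-distortion proof and make explicit the bijection between $\mathcal{C}_a$ and invariant probability densities of the Perron--Frobenius operator; the content is the same.

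One small point worth tightening: the hypothesis $f_n''\le c f_n'$ is one-sided, so your two-sided estimate
\[
\bigl|\log F_\omega'(x)-\log F_\omega'(y)\bigr|\le \frac{c}{1-\lambda}
\]
is not literally justified by it. You should observe that $f_n\in C^2([0,1])$ with $f_n'>0$ on a compact interval already forces $|f_n''/f_n'|\le c'$ for some $c'$, and run the telescoping with $c'$; the rest of your argument is unaffected.
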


\begin{proof}
Define $S\colon[0,1]\to[0,1]$ by
\begin{equation*}
S(x)=
\begin{cases}
f_n^{-1}(x)& \text{for $x\in\clopen{f_n(0)}{f_n(1)}$ and $n\in\{0,\ldots,N\}$},\\
1&\text{for $x=1$}.
\end{cases}
\end{equation*}
Now it is enough to apply \cite[Theorem 6.2.1]{LM1994}.
\end{proof}

Theorem~\ref{thm33} enforces looking for these unique parameters $p_0,\ldots,p_N$ for which $\varphi\in\mathcal{C}_a$. It is still difficult in full generality. However, it can be done with success in the case where $f_0,\ldots,f_N$ are similitudes; such a case will be considered in the next section.

Now let us set down an obvious characterization of these contractions $f_0,\ldots,f_N$ for which $\id_{[0,1]}\in\mathcal{C}_a$.

\begin{prop}\label{prop34}
The identity on $[0,1]$ belongs to $\mathcal{C}_a$ if and only if
\begin{equation}\label{w}
\sum_{n=0}^{N}f_n(x)-x=\sum_{n=1}^{N}f_n(0)
\end{equation}
for every $x\in[0,1]$.
\end{prop}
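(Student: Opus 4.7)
The proof plan is essentially a direct substitution, since membership in $\mathcal{C}_a$ breaks into trivial regularity conditions plus the functional equation, and only the latter produces a nontrivial constraint.

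First I would note that $\id_{[0,1]}$ is trivially continuous, strictly increasing, absolutely continuous (Lipschitz with constant $1$), maps $[0,1]$ onto $[0,1]$, and satisfies the boundary conditions $\id(0)=0$ and $\id(1)=1$ required in \eqref{cond}. Hence all membership criteria for the class $\mathcal{C}_a$ are automatic except equation~\eqref{e} itself.

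Next I would substitute $\varphi(x)=x$ into~\eqref{e}. Doing this gives
\begin{equation*}
x=\sum_{n=0}^{N}f_n(x)-\sum_{n=1}^{N}f_n(0),
\end{equation*}
which after rearrangement is precisely~\eqref{w}. Conversely, if~\eqref{w} holds for every $x\in[0,1]$, then rewriting it as the preceding display shows that $\varphi=\id_{[0,1]}$ satisfies~\eqref{e}, and combined with the regularity observations of the first paragraph one concludes $\id_{[0,1]}\in\mathcal{C}_a$.

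There is no real obstacle here; the only issue is rhetorical, namely making clear that the conditions of being increasing, continuous, absolutely continuous, and satisfying $\varphi(0)=0$, $\varphi(1)=1$ are automatic for $\id_{[0,1]}$, so that~\eqref{e} alone is the decisive condition. The equivalence of the resulting instance of~\eqref{e} with~\eqref{w} is purely algebraic and completes the proof.
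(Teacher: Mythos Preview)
Your proof is correct and is exactly the intended argument. The paper itself regards this proposition as an ``obvious characterization'' and does not supply a proof; your direct substitution of $\varphi=\id_{[0,1]}$ into~\eqref{e}, together with the observation that the remaining regularity and boundary conditions are automatic, is precisely the natural justification.
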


The last result of this section gives a precise formula for $\varphi$.

\begin{theorem}\label{thm35}
Assume that $x\in[0,1]$ and let $(x_k)_{k\in\mathbb N}$ be a sequence of elements of $\{0,\ldots,N\}$ such that \eqref{x} holds. Then
\begin{equation*}
\varphi(x)= \sum_{k=1}^{\infty}{\rm sgn}(x_k) \left[\prod_{n=0}^{N}p_n^{\#\{i\in\{1,\ldots,k-1\}:x_i=n\}}\cdot\sum_{n=0}^{x_k-1}p_n\right].
\end{equation*}
\end{theorem}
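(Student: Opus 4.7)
The plan is to iterate the scaling property~\eqref{mu(B)} of $\mu$ by recursively shrinking $[0,x]$ down along the cylinders $[f_{x_1,\ldots,x_k}(0),f_{x_1,\ldots,x_k}(1)]$ that contain $x$. For $k\in\mathbb N$ I set $y_1=x$ and, more generally, $y_k=f_{x_1,\ldots,x_{k-1}}^{-1}(x)$. The point of Lemma~\ref{lem24} and the inductive construction in~\eqref{fff} is that $y_k\in[f_{x_k}(0),f_{x_k}(1)]$ for every $k$, which is exactly what is needed to apply~\eqref{mu(B)} at each step.

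The key recursion is the decomposition
\begin{equation*}
[0,y_k]=[0,f_{x_k}(0)]\cup[f_{x_k}(0),y_k].
\end{equation*}
Using $[0,f_{x_k}(0)]=\bigcup_{n=0}^{x_k-1}[f_n(0),f_n(1)]$ (a consequence of the chain condition~\eqref{0<1}) together with \eqref{formula2} at $k=1$, I get $\mu\bigl([0,f_{x_k}(0)]\bigr)=\sum_{n=0}^{x_k-1}p_n$, where the sum is empty (hence $0$) when $x_k=0$. For the right-hand piece, since $y_k\in[f_{x_k}(0),f_{x_k}(1)]$, I apply~\eqref{mu(B)} to obtain $\mu\bigl([f_{x_k}(0),y_k]\bigr)=p_{x_k}\mu\bigl([0,y_{k+1}]\bigr)$. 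Together these give
\begin{equation*}
\mu([0,y_k])=\sum_{n=0}^{x_k-1}p_n+p_{x_k}\,\mu([0,y_{k+1}]).
\end{equation*}

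Unfolding this recursion $K$ times starting from $\varphi(x)=\mu([0,y_1])$, a trivial induction yields
\begin{equation*}
\varphi(x)=\sum_{k=1}^{K}\Bigl(\prod_{j=1}^{k-1}p_{x_j}\Bigr)\sum_{n=0}^{x_k-1}p_n\;+\;\Bigl(\prod_{j=1}^{K}p_{x_j}\Bigr)\mu([0,y_{K+1}]),
\end{equation*}
and I rewrite the coefficient as $\prod_{j=1}^{k-1}p_{x_j}=\prod_{n=0}^{N}p_n^{\#\{i\in\{1,\ldots,k-1\}:x_i=n\}}$ by collecting equal factors. Since every $p_n\in(0,1)$ and $\mu$ is a probability measure, the remainder is bounded by $(\max\{p_0,\ldots,p_N\})^K$, which tends to $0$ as $K\to\infty$. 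Passing to the limit gives
\begin{equation*}
\varphi(x)=\sum_{k=1}^{\infty}\prod_{n=0}^{N}p_n^{\#\{i\in\{1,\ldots,k-1\}:x_i=n\}}\sum_{n=0}^{x_k-1}p_n,
\end{equation*}
and inserting the harmless factor $\sign(x_k)$ (which equals $1$ when $x_k\geq1$ and $0$ when $x_k=0$, exactly matching the empty inner sum) produces the stated formula.

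There is no serious obstacle; the only delicate point is the verification $y_k\in[f_{x_k}(0),f_{x_k}(1)]$, which ensures that~\eqref{mu(B)} may legitimately be applied at each iteration, and this is guaranteed by applying~\eqref{fff} at level $k$ and then $f_{x_1,\ldots,x_{k-1}}^{-1}$.
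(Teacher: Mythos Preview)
Your argument is correct and takes a somewhat different route from the paper's. The paper first proves by induction the increment formula
\[
\mu\big([f_{x_1,\ldots,x_{k-1}}(0),f_{x_1,\ldots,x_k}(0)]\big)
=\operatorname{sgn}(x_k)\prod_{n=0}^{N}p_n^{\#\{i\leq k-1:\,x_i=n\}}\sum_{n=0}^{x_k-1}p_n,
\]
then telescopes $\mu([0,f_{x_1,\ldots,x_l}(0)])$ into these increments and invokes the continuity of $\varphi$ (Theorem~\ref{thm32}) to pass from $f_{x_1,\ldots,x_l}(0)$ to $x$. You instead pull $x$ back through the inverse maps, obtaining the recursion $\mu([0,y_k])=\sum_{n<x_k}p_n+p_{x_k}\mu([0,y_{k+1}])$ and controlling the remainder $\prod_{j\leq K}p_{x_j}\,\mu([0,y_{K+1}])$ directly. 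Your version avoids any appeal to the continuity of $\varphi$; the paper's version is slightly more structural in that it identifies exactly the measure of each cylinder increment. The two are equivalent in substance.

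One small point of justification: you repeatedly invoke~\eqref{fff}, but the hypothesis of the theorem is only~\eqref{x}, and Lemma~\ref{lem24} merely constructs \emph{some} sequence satisfying~\eqref{fff}. What you actually need is that \emph{every} sequence satisfying~\eqref{x} also satisfies~\eqref{fff}; this is immediate from Lemma~\ref{lem23}, since $(f_{x_1,\ldots,x_k}(0))_k$ increases to $x$ and $(f_{x_1,\ldots,x_k}(1))_k$ decreases to $x$. With that remark your definition of $y_k$ and the application of~\eqref{mu(B)} are fully justified.
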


\begin{proof}
We begin with showing inductively that
\begin{equation}\label{formula1}
\hspace*{-2ex}
\mu\big([f_{n_1,\ldots,n_{k-1}}(0),f_{n_1,\ldots,n_{k}}(0)]\big)
={\rm sgn}(n_{k})\!\prod_{n=0}^{N}\!p_n^{\#\{i\in\{1,\ldots,k-1\}:n_i=n\}}\cdot\!
\sum_{n=0}^{n_{k}-1}\!p_n
\end{equation}	
for all $k\in\mathbb N$ all $n_1,\ldots,n_{k}\in\{0,\ldots,N\}$.

If $n_1=0$, then $\sign(n_1)=0$, and hence
\begin{equation*}
\mu\big([0,f_{n_1}(0)]\big)=\mu\big(\{0\}\big)=0={\rm sgn}(n_1)\sum_{n=0}^{n_1-1}p_n.
\end{equation*}
If $n_1\geq 1$, we have $\sign(n_1)=1$, and then by \eqref{0<1}, \eqref{mu(B)} and Lemma~\ref{lem31} we obtain
\begin{equation*}
\mu\big([0,f_{n_1}(0)]\big)=\sum_{n=0}^{n_1-1}\mu\big([f_n(0),f_n(1)]\big)
=\sum_{n=0}^{n_1-1}p_n\mu([0,1])={\rm sgn}(n_1)\sum_{n=0}^{n_1-1}p_n.
\end{equation*}
Therefore \eqref{formula1} holds for $k=1$ and all $n_1,\ldots,n_{k}\in\{0,\ldots,N\}$.
	
Fix $k\in\mathbb N$ and assume that \eqref{formula1} holds for all $n_1,\ldots,n_{k}\in\{0,\ldots,N\}$.
	
Fix $n_{k+1}\in\{0,\ldots,N\}$. Applying \eqref{mu(B)} and \eqref{formula1} we get
\begin{equation*}
\begin{split}
\mu\big([f_{n_1,\ldots,n_{k}}(0),f_{n_1,\ldots,n_{k+1}}(0)]\big)
&=\,p_{n_1}\mu\big([f_{n_2,\ldots,n_{k}}(0),f_{n_2,\ldots,n_{k+1}}(0)]\big)\\
&=p_{n_1}{\rm sgn}(n_{k+1})
\prod_{n=0}^{N}p_n^{\#\{i\in\{2,\ldots,k\}:n_i=n\}}\!\cdot\!\!\!\!
\sum_{n=0}^{n_{k+1}-1}p_n\\
&={\rm sgn}(n_{k+1})\prod_{n=0}^{N}p_n^{\#\{i\in\{1,2,\ldots,k\}:n_i=n\}}\!\cdot\!\!\!\!
\sum_{n=0}^{n_{k+1}-1}p_n.
\end{split}
\end{equation*}
	
By the continuity of $\varphi$ (see Theorem~\ref{thm32}) we have
\begin{equation*}
\varphi(x)=\varphi\left(\lim_{l\to\infty}f_{x_1,\ldots,x_l}(0)\right)
=\lim_{l\to\infty}\varphi(f_{x_1,\ldots,x_l}(0)).
\end{equation*}
Then using \eqref{0<1}, Lemma~\ref{lem31} and \eqref{formula1} with $n_i=x_i$ for all $i\in \{1,\ldots l\}$, we get
\begin{equation*}
\begin{split}
\varphi(f_{x_1,\ldots,x_l}(0))&=\mu([0,f_{x_1,\ldots,x_l}(0)])=
\sum_{k=1}^l\mu([f_{x_1,\ldots,x_{k-1}}(0),f_{x_1,\ldots,x_k}(0)])\\
&=\sum_{k=1}^l{\rm sgn}(x_{k})\left[\prod_{n=0}^{N}
p_n^{\#\{i\in\{1,\ldots,k-1\}:x_i=n\}}\cdot\sum_{n=0}^{{x_k}-1}p_n\right].
\end{split}
\end{equation*}
Passing with $l$ to $\infty$ we obtain the required formula for $\varphi$.
\end{proof}


\section{Similitudes case}\label{Similitudes case}
Throughout this section we assume that $f_0,\ldots,f_N$ are similitudes, i.e.\ there exist real numbers $\rho_0,\ldots,\rho_N\in(0,1)$ such that
\begin{equation}\label{rho}
\sum_{n=0}^N\rho_n=1
\end{equation}
and
\begin{equation*}
f_n(x)=\rho_n x+\sum_{k=0}^{n-1}\rho_k
\end{equation*}
for all $x\in[0,1]$ and $n\in\{0,\ldots,N\}$.

Note that \eqref{0<1} holds.

Since the above defined similitudes satisfy the assumptions of Theorem~\ref{thm33}, it follows that the class $\mathcal C$ has exactly one absolutely continuous solution. Thus according to Theorem~\ref{thm32} we conclude that $\varphi$ is singular except one very particular case of parameters $p_0,\ldots,p_N$, which we are looking for.

\begin{theorem}\label{thm41}
If $p_n=\rho_n$ for every $n\in\{0,\ldots,N\}$, then $\varphi=\id_{[0,1]}$.
\end{theorem}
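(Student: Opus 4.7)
The plan is to show that Lebesgue measure itself satisfies the invariance equation~\eqref{inv} when $p_n=\rho_n$, and then invoke the uniqueness of $\mu$ to conclude $\mu$ is Lebesgue measure, so $\varphi(x)=\mu([0,x])=x$.

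Concretely, I would fix a Borel set $A\subset[0,1]$ and let $\ell$ denote Lebesgue measure. For each $n$ the similitude $f_n$ maps $[0,1]$ bijectively onto $[f_n(0),f_n(1)]$ and scales lengths by $\rho_n$, so its inverse scales by $1/\rho_n$. Thus
\begin{equation*}
\ell(f_n^{-1}(A))=\ell(f_n^{-1}(A\cap[f_n(0),f_n(1)]))=\frac{1}{\rho_n}\,\ell(A\cap[f_n(0),f_n(1)]).
\end{equation*}
Multiplying by $\rho_n$ and summing gives
\begin{equation*}
\sum_{n=0}^{N}\rho_n\,\ell(f_n^{-1}(A))=\sum_{n=0}^{N}\ell(A\cap[f_n(0),f_n(1)])=\ell(A),
\end{equation*}
the last equality following from~\eqref{0<1}, which says that the intervals $[f_n(0),f_n(1)]$ cover $[0,1]$ and overlap only at finitely many endpoints (a Lebesgue null set). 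Since $\mu$ is the unique Borel probability measure satisfying~\eqref{inv} with the given weights, we obtain $\mu=\ell$, and consequently $\varphi(x)=\mu([0,x])=x$ for every $x\in[0,1]$.

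There is no real obstacle here; the key insight is simply that the $f_n$ partition $[0,1]$ (up to endpoints) with length ratios exactly $\rho_n$, which is precisely the balance needed for Lebesgue measure to be the self-similar invariant measure. As an alternative route one could verify that $\id_{[0,1]}$ satisfies condition~\eqref{w} of Proposition~\ref{prop34} (a short direct computation using $\sum_{n=0}^N\rho_n=1$), placing $\id_{[0,1]}$ in $\mathcal{C}_a$; then Theorem~\ref{thm33}, combined with the identification of $\varphi$ as the element of $\mathcal{C}$ produced in Theorem~\ref{thm32}, would give $\varphi=\id_{[0,1]}$ provided one knows $\varphi\in\mathcal{C}_a$. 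A third route is to plug $p_n=\rho_n$ into the explicit series of Theorem~\ref{thm35} and recognize each partial sum as $f_{x_1,\ldots,x_l}(0)$, which then tends to $x$ by~\eqref{x}. I would nonetheless favor the invariance argument above, since it is the most direct and avoids any a priori regularity claim on $\varphi$.
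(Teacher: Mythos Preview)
Your main argument---showing that Lebesgue measure satisfies the invariance equation~\eqref{inv} and then invoking the uniqueness of $\mu$---is correct and is exactly the route the paper takes; the paper also records the Proposition~\ref{prop34}/Theorem~\ref{thm33} alternative you mention. The only cosmetic difference is that the paper first checks the identity $\nu(A)=p_n\nu(f_n^{-1}(A))$ on intervals $A=[f_n(0),x]$ before passing to general Borel sets, whereas you go straight to arbitrary Borel $A$ via the scaling of $f_n^{-1}$.
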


\begin{proof}
Assume that $p_n=\rho_n$ for every $n\in\{0,\ldots,N\}$.	
	
Observe first that applying~\eqref{rho}, we get
\begin{equation*}
\sum_{n=0}^{N}f_n(x)-x=\sum_{n=0}^{N}\rho_n x+\sum_{n=0}^{N}\sum_{k=0}^{n-1}\rho_k-x=\sum_{n=0}^{N}f_n(0)=\sum_{n=1}^{N}f_n(0)
\end{equation*}
for every $x\in[0,1]$. Thus, $\id_{[0,1]}\in\mathcal C_a$, by Proposition~\ref{prop34}.
	
Now we can use Theorem~\ref{thm33} or argue as follows.
	
Denote by $\nu$ the one-dimensional Lebesgue measure restricted to $[0,1]$.
According to \cite[Theorem 12.4]{B1995} we infer that $\nu$ is the unique Borel measure on $[0,1]$ such that $\nu([0,x])=x$ for every $x\in[0,1]$.
Fix $n\in\{0,\ldots,N\}$ and choose $x\in\big[f_n(0),f_n(1)\big]$. Then
\begin{equation*}
\begin{split}
\nu([f_n(0),x])&=\nu([0,x])-\nu([0,f_n(0)])=x-f_n(0)=
\rho_n\left(\frac{x}{\rho_n}-\sum_{k=0}^{n-1}\frac{\rho_k}{\rho_n}\right)\\
&=p_n f_n^{-1}(x)=p_n\nu\big([0,f_n^{-1}(x)]\big)=
p_n\nu\big(f_n^{-1}([f_n(0),x])\big).
\end{split}
\end{equation*}
Hence
\begin{equation*}
\nu(A)=p_n\nu(f_n^{-1}(A))
\end{equation*}
for every Borel set $A\subset[f_n(0),f_n(1)]$, and in consequence,
\begin{equation*}
\nu(A)=\sum_{n=0}^N p_n\nu(f_n^{-1}(A))
\end{equation*}
for every Borel set $A\subset[0,1]$. Finally, by the uniqueness of $\mu$ we obtain
\begin{equation*}
\varphi(x)=\mu([0,x])=\nu([0,x])=x
\end{equation*}
for every $x\in[0,1]$.
\end{proof}

Combining Theorems~\ref{thm32}, \ref{thm33} and \ref{thm41} we get the following corollary.

\begin{corrolary}\label{cor43}
If  $p_n\neq \rho_n$ for some $n\in\{0,\ldots,N\}$, then $\varphi\in\mathcal C_s$.
\end{corrolary}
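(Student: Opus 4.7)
The plan is to combine Theorems~\ref{thm32}, \ref{thm33}, and~\ref{thm41} with one explicit point-evaluation of $\varphi$.

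First I would verify that the given similitudes fall under Theorem~\ref{thm33}: they are affine, hence of class $C^2$, with $f_n'(x)=\rho_n$ and $f_n''(x)=0$. Taking $\lambda=\max_n\rho_n\in(0,1)$ and any $c>0$ yields the required bounds $0<f_n'(x)\leq\lambda$ and $f_n''(x)\leq cf_n'(x)$. Theorem~\ref{thm33} then gives that $\mathcal{C}_a$ contains exactly one function, and Theorem~\ref{thm41} identifies this unique function as $\mathrm{id}_{[0,1]}$. By Theorem~\ref{thm32} applied to the parameters $p_0,\ldots,p_N$ at hand, the associated $\varphi$ lies in $\mathcal{C}_a\cup\mathcal{C}_s$, so it suffices to produce one point at which $\varphi(x)\neq x$.

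To locate such a point I would pick the \emph{smallest} index $m$ with $p_m\neq\rho_m$. Since both sequences sum to $1$ by~\eqref{prob} and~\eqref{rho}, this minimal $m$ necessarily satisfies $m<N$, so the point $f_{m+1}(0)=\sum_{k=0}^{m}\rho_k$ lies in $[0,1)$. Using Lemma~\ref{lem31} to discard the (measure-zero) shared endpoints of the intervals $[f_j(0),f_j(1)]$, and formula~\eqref{formula2} at level $k=1$, which gives $\mu([f_j(0),f_j(1)])=p_j$, I obtain
\begin{equation*}
\varphi(f_{m+1}(0))=\mu\bigl([0,f_{m+1}(0)]\bigr)=\sum_{j=0}^{m}p_j.
\end{equation*}
By minimality of $m$ the first $m$ terms coincide with $\rho_0,\ldots,\rho_{m-1}$, whereas the last term $p_m$ differs from $\rho_m$; hence $\varphi(f_{m+1}(0))\neq\sum_{j=0}^{m}\rho_j=f_{m+1}(0)$. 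Thus $\varphi\neq\mathrm{id}_{[0,1]}$, and by the first step $\varphi\in\mathcal{C}_s$.

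There is no serious obstacle. The only subtle point is justifying $m<N$, which is why the minimality of $m$ is used: otherwise the candidate endpoint would be $f_{N+1}(0)$, which is not defined, and the automatic equality $\varphi(1)=1$ at $x=1$ conveys no information. Everything else is routine bookkeeping with the already-established formulas for $\mu$.
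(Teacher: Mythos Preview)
Your argument is correct and follows the same route the paper indicates: the corollary is deduced from Theorems~\ref{thm32}, \ref{thm33}, and~\ref{thm41}, with the one step the paper leaves implicit---that $\varphi\neq\id_{[0,1]}$ when some $p_n\neq\rho_n$---being precisely what your point-evaluation at $f_{m+1}(0)$ supplies. A marginally shorter variant of this last step is to read off directly from~\eqref{formula2} (or~\eqref{mu(B)}) that $p_n=\mu([f_n(0),f_n(1)])=\varphi(f_n(1))-\varphi(f_n(0))$, so $\varphi=\id_{[0,1]}$ would force $p_n=f_n(1)-f_n(0)=\rho_n$ for every $n$; this avoids the minimal-index bookkeeping, but your version is equally valid.
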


Note that in our setting $\prod_{n=0}^{N}p_n^{p_n}\rho_n^{-p_n}\geq 1$.
Observe also that the iterated function system consisting of the contractions $f_0,\ldots,f_N$ satisfies the open set condition.
Therefore Theorem~\ref{thm41} jointly with Corollary~\ref{cor43} can be written in the following form, which corresponds to Theorem 1.1 from \cite{NW2005}.

\begin{theorem}\label{thm42}
We have $\varphi\in\mathcal C_a$ if and only if $p_n=\rho_n$ for every $n\in\{0,\ldots,N\}$. Moreover, if $\varphi\in\mathcal C_a$, then $\varphi=\id_{[0,1]}$.
\end{theorem}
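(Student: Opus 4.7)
The plan is to assemble Theorem~\ref{thm41} and Corollary~\ref{cor43}, already in hand, into the advertised equivalence, together with the uniqueness provided by Theorem~\ref{thm33}. The \emph{if} direction of the iff is immediate from Theorem~\ref{thm41}: once $p_n=\rho_n$ for every $n\in\{0,\ldots,N\}$, the function $\varphi$ coincides with $\id_{[0,1]}$, which is trivially absolutely continuous and therefore lies in $\mathcal C_a$. This simultaneously yields the \emph{moreover} clause in that case.

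For the converse, I would contrapose and assume $p_n\neq\rho_n$ for some $n\in\{0,\ldots,N\}$. Corollary~\ref{cor43} then places $\varphi$ in $\mathcal C_s$. Because $\varphi(0)=0\neq 1=\varphi(1)$, the function $\varphi$ is nonconstant, and a nonconstant function cannot simultaneously be absolutely continuous and singular; indeed such a function would have derivative zero almost everywhere and, by absolute continuity, equal the integral of that derivative, forcing it to be constant. Hence $\varphi\notin\mathcal C_a$, which completes the equivalence.

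Finally, for the \emph{moreover} clause in full generality, the argument is that $\mathcal C_a$ is a singleton: the hypotheses of Theorem~\ref{thm33} are met by the similitudes $f_0,\ldots,f_N$, and the computation at the beginning of the proof of Theorem~\ref{thm41} verifies condition~\eqref{w}, so Proposition~\ref{prop34} ensures $\id_{[0,1]}\in\mathcal C_a$, whence $\mathcal C_a=\{\id_{[0,1]}\}$; any $\varphi\in\mathcal C_a$ is therefore the identity on $[0,1]$. Since the proof amounts only to bookkeeping among results already proved, no serious obstacle is expected; the one small point requiring care is the disjointness $\mathcal C_a\cap\mathcal C_s=\emptyset$ for nonconstant members of $\mathcal C$, which is nevertheless a standard consequence of the Lebesgue decomposition theorem.
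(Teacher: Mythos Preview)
Your proof is correct and follows the paper's own approach, which is simply to read off the equivalence from Theorem~\ref{thm41} and Corollary~\ref{cor43}; you have spelled out the one implicit detail (that a nonconstant function cannot lie in $\mathcal C_a\cap\mathcal C_s$) cleanly. Your third paragraph is harmless but redundant: once the biconditional is established, the \emph{moreover} clause already follows, since $\varphi\in\mathcal C_a$ forces $p_n=\rho_n$ for all $n$ and then Theorem~\ref{thm41} gives $\varphi=\id_{[0,1]}$.
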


To the end of this section we assume that
\begin{equation*}
\rho_0=\rho_1=\cdots=\rho_N=\frac{1}{N+1}.
\end{equation*}

Note that \eqref{rho} is satisfied and equation~\eqref{e} now takes the form
\begin{equation*}\label{e1}
\varphi(x)=\sum_{n=0}^{N}\varphi\left(\frac{x+n}{N+1}\right)-
\sum_{n=1}^{N}\varphi\left(\frac{n}{N+1}\right)\tag{$\textsf{e}_N$}.
\end{equation*}
It is clear that for $N=1$ equation~\eqref{e1} reduces to equation~\eqref{e0}.

Fix $x\in[0,1]$ and define a sequence $(x_k)_{k\in\mathbb N}$ of elements of $\{0,\ldots,N\}$ as follows:

if $x=1$ we put $x_k=N$ for every $k\in\mathbb N$;

if $x<1$ we put $x_1=\left[(N+1)x\right]$ and then inductively
\begin{equation*}
x_{k+1}=\left[{(N+1)}^{k+1}x-\sum_{i=1}^{k}{(N+1)}^{k+1-i}x_i\right]
\end{equation*}
for every $k\in\mathbb N$, where $[y]$ denotes the integer part of $y\in\mathbb R$.

Clearly,
\begin{equation*}
x=\lim_{k\to\infty}f_{x_1,\ldots,x_k}(0)=\sum_{k=1}^\infty\frac{x_k}{{(N+1)}^k},
\end{equation*}
and Theorem~\ref{thm35} yields
\begin{equation}\label{varphi}
\varphi\left(\sum_{k=1}^\infty\frac{x_k}{{(N+1)}^k}\right)=
\sum_{k=1}^{\infty}{\rm sgn}(x_k) \!\left[\prod_{n=0}^{N}p_n^{\#\{i\in\{1,2,\ldots,k-1\}:x_i=n\}}\!\cdot\!\!\sum_{n=0}^{x_k-1}p_n\right]\!.
\end{equation}
In particular,
\begin{equation}\label{formula}
\varphi\left(\frac{n}{N+1}\right)=\sum_{k=0}^{n-1}p_k
\end{equation}
for every $n\in\{1,\ldots,N\}$.

Now we are able to calculate the integral of $\varphi$ on $[0,1]$.

\begin{prop}\label{prop44}
We have
\begin{equation*}
\int_0^1\varphi(x)dx=\frac{1}{N}\sum_{n=1}^{N}np_{N-n}.
\end{equation*}
\end{prop}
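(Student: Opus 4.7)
The plan is to simply integrate the functional equation \eqref{e1} over $[0,1]$ and exploit the already established values of $\varphi$ at the breakpoints $n/(N+1)$.

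First I would set $I=\int_0^1\varphi(x)\,dx$ and integrate both sides of \eqref{e1} from $0$ to $1$. For each $n\in\{0,\ldots,N\}$, a linear change of variable $u=(x+n)/(N+1)$ gives
\begin{equation*}
\int_0^1 \varphi\!\left(\frac{x+n}{N+1}\right)dx=(N+1)\int_{n/(N+1)}^{(n+1)/(N+1)}\varphi(u)\,du,
\end{equation*}
so summing over $n$ collapses to $(N+1)I$ by additivity of the integral. The second sum on the right of \eqref{e1} is constant in $x$, hence integrates to itself. This produces the identity
\begin{equation*}
I=(N+1)I-\sum_{n=1}^{N}\varphi\!\left(\frac{n}{N+1}\right),
\end{equation*}
so $NI=\sum_{n=1}^{N}\varphi(n/(N+1))$.

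Next I would substitute the closed-form value \eqref{formula}, namely $\varphi(n/(N+1))=\sum_{k=0}^{n-1}p_k$, to get
\begin{equation*}
NI=\sum_{n=1}^{N}\sum_{k=0}^{n-1}p_k.
\end{equation*}
Swapping the order of summation, the coefficient of $p_k$ is $\#\{n : k+1\leq n\leq N\}=N-k$, so the right-hand side becomes $\sum_{k=0}^{N-1}(N-k)p_k$. Reindexing with $m=N-k$ turns this into $\sum_{m=1}^{N}m\,p_{N-m}$, which yields the claimed formula after dividing by $N$.

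There is no genuine obstacle here: the only step requiring any care is verifying that integrating the functional equation is legitimate, which is immediate since $\varphi$ is continuous by Theorem~\ref{thm32}, and checking the combinatorial reindexing of the double sum.
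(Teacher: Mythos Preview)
Your proposal is correct and follows essentially the same route as the paper's proof: both integrate the functional equation \eqref{e1}, use the change of variable $u=(x+n)/(N+1)$ to collapse the first sum to $(N+1)I$, insert the values \eqref{formula}, and then reindex the resulting double sum $\sum_{n=1}^{N}\sum_{k=0}^{n-1}p_k=\sum_{k=0}^{N-1}(N-k)p_k$ to reach the stated formula. The only difference is expository---you spell out the swap of summation order and the final reindexing $m=N-k$ more explicitly than the paper does.
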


\begin{proof}
Using \eqref{formula} and~\eqref{e1}, we get
\begin{equation*}
\begin{split}
\int_0^1\varphi(x)dx&=
\sum_{n=0}^{N}\int_0^1\varphi\left(\frac{x+n}{N+1}\right)dx-
\int_0^1\sum_{n=1}^{N}\varphi\left(\frac{n}{N+1}\right)dx\\
&=(N+1)\sum_{n=0}^{N}\int_{\frac{n}{N+1}}^{\frac{n+1}{N+1}}\varphi\left(y\right)dy-\sum_{n=1}^{N}\varphi\left(\frac{n}{N+1}\right)\\
&=(N+1)\int_0^1\varphi(x)\,dx-\sum_{k=0}^{N-1}(N-k)p_k.
\end{split}
\end{equation*}
This implies the required formula for the integral of $\varphi$.
\end{proof}

We end this section observing that $\varphi$ can be extended to an increasing and continuous function satisfying~\eqref{e1} for every $x\in\mathbb R$.

\begin{prop}\label{prop45}
The function $\phi\colon\mathbb R\to\mathbb R$ given by
\begin{equation*}
\phi(x)=[x]+\varphi(x-[x])
\end{equation*}
is increasing, continuous and satisfies~\eqref{e1} for every $x\in\mathbb R$.	
\end{prop}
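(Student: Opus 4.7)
The plan is to verify the three asserted properties of $\phi$ in turn; continuity and monotonicity drop out quickly from the definition, and the functional equation reduces to \eqref{e1} applied to the fractional part $t=x-[x]$ after a careful reindexing.

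For continuity, on each open interval $(n,n+1)$ with $n\in\mathbb Z$ the formula $\phi(x)=n+\varphi(x-n)$ inherits continuity from $\varphi$; at an integer $n$ the right-hand limit equals $n+\varphi(0)=n=\phi(n)$ while the left-hand limit equals $(n-1)+\varphi(1)=n$, using $\varphi(0)=0$ and $\varphi(1)=1$. For monotonicity, on each $[n,n+1]$ the same formula is increasing because $\varphi$ is, and the matching endpoint values $\phi(n)=n$, $\phi(n+1)=n+1$ glue successive intervals into a globally increasing function.

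For the functional equation, I write $x=m+t$ with $m=[x]$ and $t\in[0,1)$, and further $m=q(N+1)+r$ with $q\in\mathbb Z$ and $r\in\{0,1,\ldots,N\}$. Then for each $n\in\{0,\ldots,N\}$,
\[
\frac{x+n}{N+1}=q+\frac{r+n+t}{N+1}.
\]
I partition $\{0,\ldots,N\}$ according to whether $r+n\le N$ (in which case the floor of the argument is $q$ and the fractional part is $(r+n+t)/(N+1)$) or $r+n\ge N+1$ (in which case the floor is $q+1$ and the fractional part is $(r+n-N-1+t)/(N+1)$). The $N+1-r$ summands of the first type together with the $r$ summands of the second type contribute total floor $(N+1-r)q+r(q+1)=(N+1)q+r=m$, while reindexing via $k=r+n$ on the first block and $k=r+n-N-1$ on the second shows that the fractional parts traverse $\{(k+t)/(N+1):k=0,\ldots,N\}$ exactly once each. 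Hence
\[
\sum_{n=0}^{N}\phi\!\left(\frac{x+n}{N+1}\right)=m+\sum_{k=0}^{N}\varphi\!\left(\frac{k+t}{N+1}\right).
\]
Applying \eqref{e1} to $\varphi$ at $t\in[0,1]$ rewrites the latter sum as $\varphi(t)+\sum_{n=1}^{N}\varphi(n/(N+1))$; since $\phi(n/(N+1))=\varphi(n/(N+1))$ for $n\in\{1,\ldots,N\}$, a rearrangement produces the identity required for $\phi$.

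The main obstacle is purely bookkeeping around the case $r+n=N+1$, where one summand argument lands exactly on the integer $q+1$: both the ``floor $q$, fractional part $1$'' and ``floor $q+1$, fractional part $0$'' readings must yield the same $\phi$-value, and indeed they do thanks to $\varphi(0)=0$ and $\varphi(1)=1$. Once this consistency is noted, the reindexing of the fractional-part arguments is routine.
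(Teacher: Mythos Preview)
Your proof is correct and follows essentially the same approach as the paper's. Both arguments decompose $[x]$ modulo $N+1$ (your $m=q(N+1)+r$ is the paper's $[x]=m(N+1)+l$), split the sum according to whether the argument $\frac{x+n}{N+1}$ has floor $q$ or $q+1$, and then reindex so that the fractional parts run over $\{(k+t)/(N+1):k=0,\ldots,N\}$ before invoking \eqref{e1} for $\varphi$; the continuity and monotonicity verifications are likewise the same in substance, differing only in presentation order and phrasing.
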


\begin{proof}
Fix $x\in\mathbb R$ and assume that $x\in\clopen{m(N+1)+l}{m(N+1)+l+1}$ for some $m\in\mathbb Z$ and $l\in\{0,1,\ldots,N\}$. Then $[\frac{x+i}{N+1}]=m$ for every $i\in\{0,\ldots,N-l\}$ and $[\frac{x+i}{N+1}]=m+1$ for every $i\in\{N-l+1,\ldots,N\}$.
Consequently,
\begin{equation*}
\begin{split}
\phi(x)&=[x]+\varphi(x-[x])=\\
&=m(N+1)+l+\sum_{n=0}^{N}\varphi\left(\frac{x-[x]+n}{N+1}\right)-
\sum_{n=1}^{N}\varphi\left(\frac{n}{N+1}\right)\\
&=m(N+1)+l+\sum_{n=0}^{N}\varphi\left(\frac{x+n-l}{N+1}-m\right)-
\sum_{n=1}^{N}\varphi\left(\frac{n}{N+1}\right)\\
&=(m+1)l+\sum_{n=0}^{l-1}\varphi\left(\frac{x+n-l+N+1}{N+1}-m-1\right)\\
&\quad+m(N+1-l)+\sum_{n=l}^{N}\varphi\left(\frac{x+n-l}{N+1}-m\right)-
\sum_{n=1}^{N}\varphi\left(\frac{n}{N+1}\right)\\
&=(m+1)l+\sum_{n=N+1-l}^{N}\varphi\left(\frac{x+n}{N+1}-m-1\right)\\
&\quad+m(N-l+1)+\sum_{n=0}^{N-l}\varphi\left(\frac{x+n}{N+1}-m\right)-
\sum_{n=1}^{N}\varphi\left(\frac{n}{N+1}\right)\\
&=\sum_{n=N-l+1}^{N}\left\{\left[\frac{x+n}{N+1}\right]+
\varphi\left(\frac{x+n}{N+1}-\left[\frac{x+n}{N+1}\right]\right)\right\}\\
&\quad+\sum_{n=0}^{N-l}\left\{\left[\frac{x+n}{N+1}\right]+
\varphi\left(\frac{x+n}{N+1}-\left[\frac{x+n}{N+1}\right]\right)\right\}-
\sum_{n=1}^{N}\varphi\left(\frac{n}{N+1}\right)\\
&=\sum_{n=0}^{N}\phi\left(\frac{x+n}{N+1}\right)-
\sum_{n=1}^{N}\phi\left(\frac{n}{N+1}\right).
\end{split}
\end{equation*}

To prove that $\phi$ is increasing fix $x<y$.
If $[x]=[y]$, then
\begin{equation*}
\phi(x)=[x]+\varphi(x-[x])=[y]+\varphi(x-[y])\leq [y]+\varphi(y-[y])=\phi(y),
\end{equation*}
and if $[x]<[y]$, then
\begin{equation*}
\phi(x)=[x]+\varphi(x-[x])\leq [y]\leq [y]+\varphi(y-[y])=\phi(y).
\end{equation*}

It is clear that $\phi$ is continuous at every point of the set  $\mathbb R\setminus\mathbb Z$. If $k\in\mathbb Z$, then by the continuity of $\varphi$ and~\eqref{cond} we obtain
\begin{equation*}
\lim_{x\to k^+}\phi(x)=\lim_{x\to k^+}\big([x]+\varphi(x-[x])\big)=k+\lim_{y\to 0^+}\varphi(y)=k=\phi(k)
\end{equation*}
and
\begin{equation*}
\lim_{x\to k^-}\phi(x)=\lim_{x\to k^-}\big([x]+\varphi(x-[x])\big)=k-1+
\lim_{y\to 1^-}\varphi(y)=k,
\end{equation*}
which completes the proof.
\end{proof}


\section{Matkowski-Weso\l owski case}\label{MW case}
First of all observe that formula~\eqref{varphi} with $N=1$ coincides with formula~\eqref{p}. So the main part of assertion~(\ref{representation}) of Theorem~\ref{thm11} is a very special case of Theorem~\ref{thm35}, whereas its moreover part follows from Corollary \ref{cor43}.
Now we would like to get a little bit more information about the class ${\mathcal C}$. For this purpose, we denote the convex hull of a set $A$ by ${\rm conv}(A)$  and put
\begin{equation*}
{\mathcal W}=\{\varphi_p:p\in(0,1)\},
\end{equation*}
where $\varphi_p\colon[0,1]\to[0,1]$ is the function defined by~\eqref{p}.

\begin{prop}\label{prop51}
The set $\mathcal W$ is linearly independent.
Moreover:
\begin{enumerate}
\item\label{convexSet} ${\rm conv}({\mathcal W})\subset\mathcal C$;
\item\label{convexAndSingular} ${\rm conv}({\mathcal W}\setminus\{\varphi_{\frac{1}{2}}\})\subset\mathcal C_s$.
\end{enumerate}
\end{prop}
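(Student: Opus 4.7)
My plan is to treat the three assertions in sequence, starting with the two convex-hull inclusions for which the machinery is already in place and finishing with the linear independence.

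For the inclusion ${\rm conv}(\mathcal{W})\subset\mathcal C$, I would argue as follows: by Theorem~\ref{thm11}(\ref{representation}), every $\varphi_p$ with $p\in(0,1)$ belongs to $\mathcal C$, and a straightforward induction on the number of summands, based on Remark~\ref{rem21}, shows that any finite convex combination of elements of $\mathcal C$ is again in $\mathcal C$. This is exactly what is needed.

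For the inclusion ${\rm conv}(\mathcal{W}\setminus\{\varphi_{1/2}\})\subset\mathcal C_s$, I take an arbitrary element $\varphi=\sum_{i=1}^{m}\alpha_i\varphi_{p_i}$ with $\alpha_i>0$, $\sum_i\alpha_i=1$, and each $p_i\neq 1/2$. By the previous step, $\varphi\in\mathcal{C}$. Since each $\varphi_{p_i}$ is singular (Theorem~\ref{thm11}(\ref{representation})), its derivative vanishes almost everywhere, and hence $\varphi'=\sum_i\alpha_i\varphi_{p_i}'=0$ a.e. An increasing continuous function on $[0,1]$ with derivative zero almost everywhere is singular, so $\varphi\in\mathcal C_s$.

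For linear independence of $\mathcal W$, the key observation is that evaluation at the dyadic points $2^{-k}$ separates the parameters. Indeed, the binary expansion of $2^{-k}$ has $x_k=1$ and $x_j=0$ for $j\neq k$, so in~\eqref{p} only the $k$-th summand survives and equals $p^k$; thus $\varphi_p(2^{-k})=p^k$. If $\sum_{i=1}^n\alpha_i\varphi_{p_i}\equiv 0$ for pairwise distinct parameters $p_1,\dots,p_n\in(0,1)$, then $\sum_{i=1}^n\alpha_i p_i^k=0$ for every $k\in\mathbb N$. Restricting to $k=1,\dots,n$ yields a homogeneous linear system whose coefficient matrix $(p_i^k)_{k,i=1}^{n}$ has determinant $\prod_{i}p_i\cdot\prod_{1\leq i<j\leq n}(p_j-p_i)\neq 0$, forcing $\alpha_1=\cdots=\alpha_n=0$.

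The only step that requires any thought is assertion~(\ref{convexAndSingular}); once one recalls that, for a continuous increasing function, singularity is equivalent to the almost-everywhere vanishing of the derivative, the argument is immediate. The remainder of the proof is a Vandermonde computation together with the already established Remark~\ref{rem21}.
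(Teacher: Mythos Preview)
Your proof is correct and follows the same overall architecture as the paper: the key evaluation $\varphi_p(2^{-k})=p^k$ for linear independence, Remark~\ref{rem21} for assertion~(\ref{convexSet}), and the singularity of each $\varphi_p$ with $p\neq\tfrac12$ from Theorem~\ref{thm11}(\ref{representation}) for assertion~(\ref{convexAndSingular}).

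There is one cosmetic divergence worth flagging. From the relation $\sum_{i=1}^n\alpha_i p_i^k=0$ for all $k\in\mathbb N$, you invoke a Vandermonde determinant on the finite system $k=1,\dots,n$, whereas the paper instead orders the parameters as $p_1<\cdots<p_n$, divides through by $p_n^k$, and lets $k\to\infty$ to peel off $\alpha_n=0$, then iterates. Both arguments are standard and equally short; yours has the minor advantage of not requiring an ordering of the $p_i$, while the paper's avoids computing a determinant. For assertion~(\ref{convexAndSingular}) you spell out the almost-everywhere vanishing of the derivative, which the paper leaves implicit by simply citing Theorem~\ref{thm11}(\ref{representation}); your unpacking is correct and arguably clearer.
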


\begin{proof}
To prove that $\mathcal W$ is linearly independent fix $n\in\mathbb N$, $\alpha_1,\ldots,\alpha_n\in\mathbb R$, $0<p_1<p_2<\dots<p_n<1$ and assume that
\begin{equation*}
\sum_{i=1}^n\alpha_i\varphi_{p_i}(x)=0.
\end{equation*}
for every $x\in[0,1]$. Applying~\eqref{p} we conclude that $\varphi_{p_i}(\frac{1}{2^k})=p_i^k$ for all $k\in\mathbb N$ and $i\in\{1,\ldots,n\}$. Then for every $k\in\mathbb N$ we have
\begin{equation*}
\sum_{i=1}^{n}\alpha_i\left(\frac{p_i}{p_n}\right)^k=0.
\end{equation*}
Taking the limit as $k\to \infty$ we get $\alpha_n=0$. Repeating this procedure $n-1$ times gives $\alpha_n=\alpha_{n-1}=\dots=\alpha_1=0$.

Assertion~(\ref{convexSet}) follows from Remark~\ref{rem21} and assertion~(\ref{convexAndSingular}) is a consequence of the moreover part of assertion~(\ref{convexAndSingular}) of Theorem~\ref{thm11}.
\end{proof}

To formulate an answer to the problem posed in \cite{M1985} by Janusz Matkowski
define first a function $\varphi_1\colon[0,1]\to\mathbb R$ putting $\varphi_1(x)=1$ and observe that by Proposition~\ref{prop51} and the fact that $\varphi_1(0)=1$ and $\varphi_p(0)=0$ for every $p\in(0,1)$ the set
${\mathcal W}\cup\{\varphi_1\}$ is linearly independent. Let $\mathcal M$ denote the vector space whose basis is ${\mathcal W}\cup\{\varphi_1\}$, i.e.
\begin{equation*}
{\mathcal M}={\rm lin}\big({\mathcal W}\cup\{\varphi_1\}\big).
\end{equation*}
Applying Proposition~\ref{prop51} and Remark~\ref{rem12}, we get the following result.

\begin{theorem}\label{thm52}
Every function belonging to $\mathcal M$ is a continuous solution of equation~\eqref{e0}.
Moreover, $\sum_{i=1}^n\alpha_i\varphi_{p_i}\in\mathcal M$ is:
\begin{enumerate}
\item monotone provided that ${\rm sgn}(\alpha_i)={\rm sgn}(\alpha_j)$ for all $i,j\in\{1,\ldots,n\}$ such that $p_i,p_j\in(0,1)$;
\item singular for all $p_1,\ldots,p_n\in (0,\frac{1}{2})\cup(\frac{1}{2},1]$.
\end{enumerate}
\end{theorem}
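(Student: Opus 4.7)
Proof proposal. The plan is to exploit the linearity of equation~\eqref{e0} together with the results collected in Theorem~\ref{thm11} and Remark~\ref{rem12}; each of the three assertions then reduces to an immediate check. First I would verify that the constant function $\varphi_1\equiv 1$ is itself a continuous solution of~\eqref{e0}, which is the tautology $1=1+1-1$. Since by assertion~(\ref{representation}) of Theorem~\ref{thm11} every $\varphi_p$ with $p\in(0,1)$ is a continuous solution of~\eqref{e0}, and since the set of continuous solutions of the linear equation~\eqref{e0} is a real vector space, every element of $\mathcal M={\rm lin}(\mathcal W\cup\{\varphi_1\})$ is automatically a continuous solution of~\eqref{e0}, which settles the main part of the theorem.

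For assertion~(i), I would split the sum $\sum_{i=1}^n\alpha_i\varphi_{p_i}$ according to whether $p_i=1$ or $p_i\in(0,1)$. Each $\varphi_{p_i}$ with $p_i\in(0,1)$ lies in $\mathcal C$ and is therefore increasing. Under the hypothesis that the coefficients $\alpha_i$ indexed by those $p_i\in(0,1)$ all share a common sign, the partial sum over those indices is a nonneg (respectively nonpos) linear combination of increasing functions and is hence increasing (respectively decreasing). The remaining term, a scalar multiple of the constant function $\varphi_1$ (if present at all), does not affect monotonicity; equivalently one can iterate Remark~\ref{rem12}(i).

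For assertion~(ii), each $\varphi_{p_i}$ with $p_i\in(0,\frac{1}{2})\cup(\frac{1}{2},1)$ is singular by Theorem~\ref{thm11}, meaning its classical derivative vanishes Lebesgue-a.e.\ on $[0,1]$, while $\varphi_1$ is constant, hence has derivative $0$ everywhere. Each $\varphi_{p_i}$ is continuous and of bounded variation, so the combination $f=\sum_{i=1}^n\alpha_i\varphi_{p_i}$ is again continuous and of bounded variation, and is differentiable almost everywhere with $f'(x)=\sum_i\alpha_i\varphi_{p_i}'(x)=0$ for almost every $x\in[0,1]$. Hence $f$ is singular.

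The main (and essentially only) obstacle is one of interpretation: in assertion~(ii) the combination $f$ need not be monotone or $[0,1]$-valued, so ``singular'' must be read in the standard sense (continuous, of bounded variation, derivative zero almost everywhere, equivalently the Lebesgue--Stieltjes measure induced by $f$ is singular with respect to Lebesgue measure) rather than as membership in $\mathcal C_s$. With this reading, the linearity argument above goes through without further complication.
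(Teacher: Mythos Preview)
Your proposal is correct and follows essentially the same route as the paper, which simply records the theorem as a direct consequence of Proposition~\ref{prop51} and Remark~\ref{rem12}; you have merely spelled out the linearity and singularity arguments that those citations encode. Your observation that in assertion~(ii) ``singular'' must be read in the BV sense (derivative vanishing a.e.) rather than as membership in $\mathcal C_s$ is the right clarification, and your handling of the constant $\varphi_1$ term is exactly what is needed.
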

\bigskip

{\bf Acknowledgement.} The research of the first author was supported by the Silesian University Mathematics Department (Iterative Functional Equations and Real Analysis program). Furthermore, the research leading to these results has received funding from the European Research  Council under the European Union's Seventh Framework Programme (FP/2007-2013) / ERC Grant Agreement n.291497.


\begin{thebibliography}{99}
\bibitem{B1995} Billingsley, P.: Probability and measure, Wiley Series in Probability and Mathematical Statistics, John Wiley \& Sons, Inc., New York, third edition, 1995, A Wiley-Interscience Publication.
\bibitem{DF1996} Dubins, L.E., Freedman, D.A.: Invariant probabilities for certain Markov processes, Ann. Math. Statist. 37 (1966), 837--848.
\bibitem{F1997} Falconer, K.: Techniques in fractal geometry, John Wiley \& Sons, Ltd., Chichester, 1997.
\bibitem{H1981} Hutchinson, J.E.: Fractals and self-similarity, Indiana Univ. Math. J. 30 (1981), 713--747.
\bibitem{2016} Kania, T., M\'{a}th\'{e}, A., Morawiec, J., Rmoutil, M., Z\"{u}rcher, T.: A functional equation, manuscript.
\bibitem{LM1994} Lasota, A., Mackey, M.C.: Chaos, fractals, and noise,
Applied Mathematical Sciences, volume 97, Springer-Verlag, New York,
second edition, 1994, Stochastic aspects of dynamics.
\bibitem{LaMy1994} Lasota, A., Myjak, J.: Generic properties of fractal measures,
Bull. Pol. Acad. Sci., Math. 42 (1994), 283--296.
\bibitem{LP1977} Lasota, A., Pianigiani, G.: Invariant measures on topological spaces, Boll. Un. Mat. Ital. B (5) 14 (1977), 592--603.
\bibitem{M1985} Matkowski, J.: Remark on BV-solutions of a functional equation connected with invariant measures, Aequationes Math. 29 (1985), 210--213.
\bibitem{MW2012} Misiewicz, J., Weso\l owski. J.: Winding planar probabilities,
Metrika 75 (2012), 507--519.
\bibitem{NW2005} Ngai, S.M., Wang, Y.: Self-similar measures associated to IFS with non-uniform contraction ratios, Asian J. Math. 9 (2005), 227--244.
\bibitem{PSS2000} Peres, Y., Schlag, W., Solomyak, B.: Sixty years of Bernoulli convolutions, Fractal geometry and stochastics,~II (Greifswald/Koserow, 1998), 39--65, Progr. Probab. 46, Birkh\"{a}user, Basel, 2000.
\bibitem{R1957} R\'{e}nyi, A.: Representation for real numbers and their ergodic properties, Acta Math. Acad. Sci. Hungar. 8 (1957), 477--493.
\bibitem{S2000a} Szarek, T., Generic properties of learning systems. Ann. Polon. Math. 73 (2000), 93--103.
\bibitem{S2000b} Szarek, T.: Invariant measures for iterated function systems. Ann. Polon. Math. 75 (2000), 87--98.

\end{thebibliography}
\end{document}